\author{Rafael Torres}
\title[Collapsing, minimal volume, entropy, and Yamabe invariant]{Some examples of vanishing Yamabe invariant and minimal volume, and collapsing of inequivalent smoothings and PL-structures}
\address{Scuola Internazionale Superiori di Studi Avanzati (SISSA)\\Via Bonomea 265\\34136\\Trieste\\Italy}
\email{rtorres@sissa.it}
\theoremstyle{plain}
\newtheorem{theorem}{Theorem}
\newtheorem{corollary}{Corollary}
\newtheorem{proposition}{Proposition}
\newtheorem{example}{Example}
\newtheorem{remark}{Remark}
\newtheorem{lemma}{Lemma}
\newtheorem{claim}{Claim}
\newtheorem{condition}{Condition}
\newtheorem{thm}{Theorem}
\newtheorem{cor}[thm]{Corollary}
\newtheorem{prop}[thm]{Proposition}
\theoremstyle{definition}
\newtheorem{definition}{Definition}
\newcommand{\R}{\mathbb{R}}
\newcommand{\Z}{\mathbb{Z}}
\newcommand{\N}{\mathbb{N}}
\newcommand{\C}{\mathbb{C}}
\newcommand{\Aut}{\textnormal{Aut}}
\newcommand{\id}{\textnormal{id}}
\DeclareMathOperator{\MinVol}{MinVol}
\DeclareMathOperator{\DMinVol}{D-MinVol}
\DeclareMathOperator{\Vol}{Vol}
\DeclareMathOperator{\He}{h}
\DeclareMathOperator{\Id}{id}
\DeclareMathOperator{\Diff}{Diff}
\begin{document}

\maketitle


\emph{Abstract}: In this short note, exploits of constructions of $\mathcal{F}$-structures coupled with technology developed by Cheeger-Gromov and Paternain-Petean are seen to yield a procedure to compute minimal entropy, minimal volume, Yamabe invariant and to study collapsing with bounded sectional curvature on inequivalent smooth structures and inequivalent PL-structures within a fixed homeomorphism class. We compute these fundamental Riemannian invariants for every high-dimensional smooth manifold on the homeomorphism class of any smooth manifold that admits a Riemannian metric of zero sectional curvature. This includes all exotic and all fake tori of dimension greater than four. We observe that the minimal volume is not an invariant of the smooth structures, yet the Yamabe invariant does discern the standard smooth structure from all the others. We also observe that the fundamental group places no restriction on the vanishing of the minimal volume and collapse with bounded sectional curvature for high-dimensional manifolds.

\section{Introduction and main results}

Within a fixed homeomorphism class, we refer as inequivalent smoothings to different smooth structures up to diffeomorphism and as inequivalent PL-structures to different PL-structures up to PL-homeomorphism. The first examples of inequivalent smoothings were provided by Milnor \cite{[Mil]} on the homeomorphism class of the 7-sphere. A classical success of surgery theory established the existence of both inequivalent smoothings and inequivalent PL-structures on high-dimensional manifolds \cite{[W]}. A manifold homeomorphic to an n-sphere is known as a homotopy sphere. Hsiang-Wall \cite{[HW]} have shown that a closed n-manifold that is homotopy equivalent to the n-torus is homeomorphic to it provided $n\geq 5$. Such a manifold is known as a homotopy n-torus. Farrell-Jones \cite{[FJo]} have proven the Borel conjecture for closed Riemannian manifolds with non-negative sectional curvature. Motivated by their result, we call a homotopy Euclidean space n-form a closed n-manifold that is homotopy equivalent to $\mathbb{E}^n/\Gamma$.\\

It is known that inequivalent smoothings need not share certain basic geometric properties with the standard smooth structure. Hitchin has shown in \cite{[NH]} that certain homotopy spheres do not admit a Riemannian metric of positive scalar curvature, while the round metric on the standard $S^n$ has positive sectional curvature. When it comes to computations of fundamental invariants in Riemannian geometry, Euclidean geometry canonically serves as a basic model; see Section \ref{Section CG} for definitions of the invariants studied in this paper. For example, among the many interesting geometric traits of an  n-dimensional compact Euclidean space form $\mathbb{E}^n/\Gamma$ are the existence of Riemannian metrics of zero sectional curvature, hence the vanishing and realization of its Yamabe invariant. Gromov's minimal volume is zero for $\mathbb{E}^n/\Gamma$, and so is its minimal entropy. Moreover, Riemannian manifolds with zero sectional curvature make prototypes to study collapsing since they observe that collapse by rescalling preserves bounded sectional curvature. Although these invariants play a fundamental role in Riemannian geometry, their computations are as challenging as they are scarce.\\

We observe in this note that exploits of $\mathcal{F}$-structures and results of Gromov \cite{[G]}, Cheeger-Gromov \cite{[CG]}, and Paternain-Petean {[PP1]}, among others, yield a procedure to calculate the aforementioned invariants on inequivalent smoothings and PL-structures for manifolds of dimension at least five. This procedure enlarges considerably the set of examples for which the values of the invariants are known. We show that the minimal volume does not distinguish different smooth nor PL-structures on a homotopy Euclidean space form, while the Yamabe invariant does discern the standard smooth structure. The precise statement is as follows.

\begin{thm}
{\label{Theorem P}} Every homotopy Euclidean space n-form $M$ with $n\geq 5$ admits a polarized $\mathcal{F}$-structure. Consequently, \begin{equation}\MinVol(M) = 0,\end{equation} and $M$ collapses with bounded sectional curvature. 

The Yamabe invariant satisfies\begin{equation}\mathcal{Y}(M) = 0,\end{equation} and it is realized, i.e., there exists a scalar-flat Riemannian metric on $M$ if and only if $M$ is diffeomorphic or PL-homeomorphic to the Euclidean space form $\mathbb{E}/\Gamma$.
\end{thm}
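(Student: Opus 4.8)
The plan is to establish the four assertions in sequence, with the construction of the polarized $\mathcal{F}$-structure as the principal step from which the volume and collapsing statements follow formally, and to treat the Yamabe invariant by pairing a lower bound coming from collapse against an upper bound coming from the absence of positive scalar curvature.

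First I would produce the polarized $\mathcal{F}$-structure on $M$. The flat model $N = \mathbb{E}^n/\Gamma$ carries a canonical polarized $\mathcal{F}$-structure: its local parallel fields generate local torus actions of positive rank (the translational directions of the crystallographic group $\Gamma$), which the finite holonomy recombines into a globally defined $\mathcal{F}$-structure with all orbits of positive dimension, one of the founding examples of Cheeger--Gromov. Since $M$ is homotopy equivalent to $N$ and $n \geq 5$, surgery theory (Hsiang--Wall in the torus case, and the validity of the Borel program for these groups in general) describes $M$ as a smoothing of $N$ in which the nonstandard structure is concentrated in a chart. I would then arrange the polarized $\mathcal{F}$-structure on $N$ to restrict to a free circle action on a collar of the relevant sphere and transport the structure across the smoothing, extending it over the modified region while keeping all orbits positive-dimensional. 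This transport, namely verifying that neither the exotic smoothing nor the fake structure obstructs a polarized $\mathcal{F}$-structure, is the main obstacle, and it is here that the surgery description must be meshed with the gluing calculus for $\mathcal{F}$-structures.

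Granting the polarized $\mathcal{F}$-structure, the equality $\MinVol(M) = 0$ and the collapse of $M$ with bounded sectional curvature are immediate from the theorem of Cheeger--Gromov that a polarized $\mathcal{F}$-structure forces both. For the Yamabe invariant I would combine two bounds. The lower bound $\mathcal{Y}(M) \geq 0$ follows from Paternain--Petean, who show that a manifold carrying a polarized $\mathcal{F}$-structure has nonnegative Yamabe invariant. The upper bound $\mathcal{Y}(M) \leq 0$ follows from the absence of a metric of positive scalar curvature: $M$ admits a degree-one map onto the enlargeable manifold $N$ and is therefore itself enlargeable, whence no positive scalar curvature metric exists (Gromov--Lawson, with Schoen--Yau covering the non-spin case). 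Together these give $\mathcal{Y}(M) = 0$.

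Finally I would settle realization. If $M$ is diffeomorphic to $N = \mathbb{E}^n/\Gamma$, or carries the standard PL-structure underlying the flat smooth structure, then the flat metric is scalar-flat and realizes the invariant. Conversely, suppose $\mathcal{Y}(M) = 0$ is realized by a scalar-flat metric $g$. Because $M$ carries no metric of positive scalar curvature, the rigid case of the Kazdan--Warner trichotomy (Bourguignon) forces $g$ to be Ricci-flat. The universal cover is then complete and Ricci-flat, so by the Cheeger--Gromoll splitting theorem it is isometric to $\mathbb{R}^k \times C$ with $C$ compact and simply connected; comparing the polynomial volume growth of the cover with the growth of $\pi_1(M) \cong \Gamma$, which is virtually $\Z^n$ of degree $n$, forces $k = n$ and $C$ a point. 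Thus $g$ is flat, $M$ is a closed flat manifold with fundamental group $\Gamma$, and Bieberbach rigidity identifies it affinely with $\mathbb{E}^n/\Gamma$. This establishes the stated equivalence.
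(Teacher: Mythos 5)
Your treatment of the minimal volume, the collapsing statement, and the Yamabe invariant is sound and matches the paper's logic (Cheeger--Gromov for $\MinVol$ and collapse, nonnegativity of $\mathcal{Y}$ from collapse with bounded scalar curvature, enlargeability ruling out positive scalar curvature); your realization argument via Ricci-flatness, the Cheeger--Gromoll splitting theorem, and Bieberbach rigidity is a correct expansion of what the paper simply cites as the Gromov--Lawson/Bieberbach theorem. The genuine gap is in the first and principal step, the construction of the polarized $\mathcal{F}$-structure. Your premise that surgery theory describes $M$ as a smoothing of $N=\mathbb{E}^n/\Gamma$ ``in which the nonstandard structure is concentrated in a chart'' is false: that description amounts to $M\cong N\#\Sigma$ for a homotopy sphere $\Sigma$, but connected sums with homotopy spheres only account for the top summand of $[T^n,PL/O]\cong\bigoplus_i H^i(T^n;\pi_i(PL/O))$, and they produce \emph{no} fake (PL-inequivalent) structures at all, since $T^n\#\Sigma^n$ is PL-homeomorphic to $T^n$ by the Alexander trick. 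So your transport scheme cannot even get started on most exotic smoothings and on every fake torus covered by the theorem; and even in the connected-sum case, the ``transport across the smoothing'' that you yourself flag as the main obstacle is left unresolved, so the argument is incomplete on its own terms.

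The paper closes this gap by a different mechanism, which is the key idea you are missing: rather than transporting the structure across the exotic region, one kills the exotic/fake structure by passing to a finite cover. By Hsiang--Shaneson and Wall (refined in Davis's appendix: normal invariants live in cohomology with finite coefficients, are natural under coverings, and any such class dies in a suitable finite cover), every homotopy Euclidean space form with $n\geq 5$ has a finite cover $\widetilde{M}\rightarrow M$ such that $\widetilde{M}$ is \emph{diffeomorphic} to the standard $T^n$. The free $T^n$-action on this cover then descends to a polarized $\mathcal{F}$-structure on $M$ itself (the paper's Proposition \ref{Proposition OrbitSpace}): the deck transformations need not commute with the torus action, but the finite Galois coverings and the representations $\Phi_i:\Gamma_i\rightarrow\Aut(T^{k_i})$ built into the definition of an $\mathcal{F}$-structure absorb exactly this failure --- which is also why the output is an $\mathcal{F}$-structure rather than a $\mathcal{T}$-structure. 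This descent argument requires no control whatsoever over how the smooth or PL structure on $M$ differs from the standard one, which is what makes the theorem reachable; without it, or some substitute of comparable strength, your proof does not go through.
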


Theorem \ref{Theorem P} can be compared to results in \cite{[B]} and \cite{[BaT]} (see Remark \ref{Remark NilDavis} for a suggested extension). Bessi\`eres has shown that the minimal volume is sensitive to inequivalent smoothings on homeomorphism types of hyperbolic manifolds \cite{[B]}. Inequivalent smoothings and PL-structures on the n-torus for were shown to exist by Casson, Wall, and Hsiang-Shaneson \cite{[HS], [HW]}, provided $n\geq 5$. They are respectively known as exotic and fake tori, and they are all addressed by Theorem \ref{Theorem P}. The torus $T^n = S^1\times \cdots \times S^1$ is called the standard n-torus for $n\in \N$. 

\begin{cor}{\label{Corollary P}} Every homotopy n-torus $M$ with $n\geq 5$ admits a polarized $\mathcal{F}$-structure. Consequently, \begin{equation}\MinVol(M) = 0,\end{equation} and $M$ collapses with bounded sectional curvature. 

The Yamabe invariant satisfies \begin{equation} \mathcal{Y}(M) = 0,\end{equation} and it is realized, i.e., there exists a scalar-flat Riemannian metric on $M$ if and only if $M$ is diffeomorphic or PL-homeomorphic to the n-torus $S^1\times \cdots \times S^1$.

\end{cor}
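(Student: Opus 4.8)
The plan is to deduce the corollary directly from Theorem \ref{Theorem P} by recognizing the standard $n$-torus as a particular flat manifold. First I would record that the standard $n$-torus $T^n = S^1\times \cdots \times S^1$ is isometric to the Euclidean space form $\mathbb{E}^n/\mathbb{Z}^n$, where $\mathbb{Z}^n$ acts freely on $\mathbb{E}^n = \R^n$ by integer translations; this exhibits $T^n$ as a closed Riemannian manifold of zero sectional curvature, i.e. an Euclidean space $n$-form with $\Gamma = \mathbb{Z}^n$.

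Next I would argue that every homotopy $n$-torus is a homotopy Euclidean space $n$-form in the sense of the introduction. Indeed, a homotopy $n$-torus $M$ is by definition a closed $n$-manifold homotopy equivalent to $T^n$, and since $T^n = \mathbb{E}^n/\mathbb{Z}^n$, the manifold $M$ is homotopy equivalent to $\mathbb{E}^n/\Gamma$ with $\Gamma = \mathbb{Z}^n$. Thus $M$ satisfies the hypotheses of Theorem \ref{Theorem P} with $n\geq 5$, and by Hsiang-Wall $M$ is moreover homeomorphic to $T^n$.

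The conclusions then follow by specialization. Applying Theorem \ref{Theorem P} to $M$ produces a polarized $\mathcal{F}$-structure, whence $\MinVol(M) = 0$, the manifold $M$ collapses with bounded sectional curvature, and $\mathcal{Y}(M) = 0$. For the realization statement, Theorem \ref{Theorem P} asserts that a scalar-flat metric exists on $M$ if and only if $M$ is diffeomorphic or PL-homeomorphic to $\mathbb{E}^n/\Gamma$; substituting $\mathbb{E}^n/\Gamma = T^n = S^1\times \cdots \times S^1$ yields precisely the claimed dichotomy between the standard torus and the exotic or fake tori.

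Since the corollary is a direct specialization of Theorem \ref{Theorem P}, I do not anticipate a genuine obstacle. The only point meriting care is the identification of $S^1\times \cdots \times S^1$ with the Euclidean space form $\mathbb{E}^n/\mathbb{Z}^n$, so that the standard smooth and PL-structure on the torus is exactly the one singled out by the realization clause; this identification is standard, and the existence of inequivalent smoothings and PL-structures for $n\geq 5$, furnished by Casson, Wall, and Hsiang-Shaneson, is what makes the statement nonvacuous.
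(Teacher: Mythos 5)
Your proposal is correct and takes essentially the same route as the paper: the paper proves Theorem \ref{Theorem P} for all homotopy Euclidean space forms and obtains Corollary \ref{Corollary P} as the immediate specialization $T^n = \mathbb{E}^n/\mathbb{Z}^n$, exactly the identification you make. The appeal to Hsiang--Wall is not even needed for the deduction, since homotopy equivalence to $T^n = \mathbb{E}^n/\mathbb{Z}^n$ already makes $M$ a homotopy Euclidean space form in the paper's sense.
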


Baues-Tuschmann have shown that the vanishing of the more restrictive invariant $\DMinVol$ does discern the standard smooth structure on the n-torus from certain inequivalent smoothings (see Section \ref{Section CG} for the definition of the invariant). Corollary \ref{Corollary P} says this is not the case for $\MinVol$ of any homotopy n-torus. Moreover, we observe the following corollary of Baues-Tuschmann's work.

\begin{prop}{\label{Proposition P}} Within the homeomorphism type of the n-torus for $n\geq 5$ there are smoothings and PL-structures for which $\DMinVol$ decreases under finite coverings.

On the other hand, for every choice of smoothing or PL-structure, $\MinVol$ is invariant under finite coverings.

\end{prop}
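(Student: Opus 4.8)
I would treat the two statements separately. For the invariance of $\MinVol$, the plan is to reduce everything to Corollary~\ref{Corollary P}. First I would check that a finite covering $p\colon \tilde M\to M$ of a homotopy $n$-torus is again a homotopy $n$-torus: such a cover corresponds to a finite-index subgroup $H\leq \pi_1(M)\cong\Z^n$, so $\pi_1(\tilde M)\cong H\cong \Z^n$, and since $M$ is aspherical (being homotopy equivalent to $T^n$) so is $\tilde M$; thus $\tilde M$ is a closed aspherical manifold with fundamental group $\Z^n$, hence homotopy equivalent and, by Hsiang--Wall \cite{[HW]}, homeomorphic to $T^n$. Corollary~\ref{Corollary P} then applies to both $M$ and $\tilde M$ for every smoothing or PL-structure, giving $\MinVol(\tilde M)=0=\MinVol(M)$, so the value is unchanged under passage to a finite cover. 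As a consistency check, pulling back a metric of bounded sectional curvature along $p$ multiplies its volume by $\deg p$ while preserving the curvature bound, so $\MinVol(\tilde M)\leq(\deg p)\,\MinVol(M)$, and both sides vanish.

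For the first statement, the plan is to exhibit a single finite covering along which $\DMinVol$ strictly drops. I would start from an exotic (respectively fake) $n$-torus $\Sigma$ for which Baues--Tuschmann have shown $\DMinVol(\Sigma)>0$, thereby separating it from the standard torus. The decisive topological input is that such a structure is resolved by a finite cover: by the surgery-theoretic classification of homotopy tori \cite{[HS],[HW]}, the finite covering $\tilde\Sigma\to\Sigma$ induced by multiplication by a suitably divisible integer on $\pi_1\cong\Z^n$ carries the exotic structure to the standard one, so that $\tilde\Sigma$ is diffeomorphic (respectively PL-homeomorphic) to $T^n$.

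It then remains to record that $\DMinVol(T^n)=0$, which I would obtain from the flat metric: rescaling a flat metric on $T^n$ by $t^2$ leaves the sectional curvature identically zero while driving the volume (of order $t^n$) and the diameter (of order $t$) to zero as $t\to 0$, so the rescaled metrics eventually meet any diameter constraint in the definition of $\DMinVol$ and have volume tending to $0$. Chaining the three facts yields $\DMinVol(\Sigma)>0=\DMinVol(T^n)=\DMinVol(\tilde\Sigma)$, so $\DMinVol$ decreases along the covering $\tilde\Sigma\to\Sigma$, as claimed. I expect the main ingredient to be the covering-triviality of exotic and fake tori rather than any geometric estimate: it is exactly the mechanism underlying Baues--Tuschmann's positivity statement and is imported from the classification of homotopy tori, whereas the two endpoint computations $\DMinVol(T^n)=0$ and $\DMinVol(\Sigma)>0$ are, respectively, elementary and quoted.
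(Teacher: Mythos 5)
Your proof is correct and follows essentially the same route as the paper: Baues--Tuschmann's positivity $\DMinVol(\Sigma)>0$ for a suitable exotic/fake torus, the Hsiang--Shaneson/Wall fact that every homotopy $n$-torus ($n\geq 5$) is finitely covered by the standard $T^n$, the elementary vanishing $\DMinVol(T^n)=0$, and Corollary~\ref{Corollary P} for the $\MinVol$ statement. You merely flesh out details the paper leaves implicit (the rescaling computation for the flat metric, and the check that a finite cover of a homotopy torus is again a homotopy torus, which is what makes Corollary~\ref{Corollary P} applicable to the cover).
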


Corollary \ref{Corollary P} answers a question of Baues-Tuschmann \cite{[BaT]}; see Remark \ref{Remark TS}. Theorem \ref{Theorem P} and the results in this note can be generalized to many more inequivalent smoothings and PL-structures on a broader range of homeomorphism types (see Corollary \ref{Corollary HRPN}, Theorem \ref{Theorem HP}, and Remark \ref{Remark NilDavis}).\\


We conclude the note by showing that the fundamental group places no restriction on the vanishing of the minimal volume and collapse with bounded sectional curvature for manifolds of dimension greater than four. 

\begin{thm}{\label{Theorem FundGroup}} Let $G$ be a finitely presented group and suppose $n\geq 5$. There exists a closed smooth $n$-manifold $M(G)$ with fundamental group $\pi_1(M(G))\cong G$ that admits a polarized $\mathcal{T}$-structure. Consequently,\begin{equation}\MinVol(M(G)) = 0,\end{equation} and $X(G)$ collapses with bounded sectional curvature. 
\end{thm}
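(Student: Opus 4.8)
The plan is to decouple two tasks. First, I would construct a closed smooth $n$-manifold $M(G)$ with $\pi_1(M(G))\cong G$ that carries a polarized $\mathcal{T}$-structure; second, I would deduce the vanishing of $\MinVol(M(G))$ and the collapse with bounded sectional curvature as immediate consequences of the Cheeger--Gromov theory \cite{[CG]}, exactly as in the proof of Theorem \ref{Theorem P}. Only the first task requires genuine work, so the entire argument concentrates on producing the polarized $\mathcal{T}$-structure together with the prescribed fundamental group.

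For the construction I would fix a finite presentation $G=\langle x_1,\dots,x_k\mid r_1,\dots,r_m\rangle$ and begin with the connected sum $N_k:=\#_k(S^1\times S^{n-1})$, whose fundamental group is the free group $F_k$ on $x_1,\dots,x_k$. The manifold $N_k$ admits a polarized $\mathcal{T}$-structure: each summand $S^1\times S^{n-1}$ carries the free circle action rotating its $S^1$-factor, and these local circle actions patch across the connect-sum necks to a $\mathcal{T}$-structure all of whose orbits are one-dimensional. Each relator $r_j$ is a word in the $x_i$ and is thus represented by a smoothly embedded loop $\gamma_j\subset N_k$; the loops may be chosen pairwise disjoint, and being embedded circles in an orientable manifold they have trivial normal bundle, since an oriented $(n-1)$-plane bundle over $S^1$ is trivial. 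Performing surgery on each $\gamma_j$ -- excising an invariant tube $S^1\times D^{n-1}$ and gluing in $D^2\times S^{n-2}$ -- kills the normal closure of $[\gamma_j]=r_j$ in $\pi_1$ without creating new generators, because $S^{n-2}$ is simply connected for $n\geq 4$. After all $m$ surgeries, van Kampen's theorem yields $\pi_1(M(G))\cong F_k/\langle\langle r_1,\dots,r_m\rangle\rangle\cong G$.

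The main obstacle is to carry the polarized $\mathcal{T}$-structure through these surgeries, and here I would follow the surgery techniques of Paternain--Petean \cite{[PP1]}. The idea is to isotope each $\gamma_j$ onto an orbit of the local circle action, so that its tubular neighborhood $S^1\times D^{n-1}$ is invariant and the surgery is performed equivariantly in the $S^1$-direction. The delicate point is that the rotation of the $\partial D^2=S^1$ factor does \emph{not} extend to a free action over the surgery handle $D^2\times S^{n-2}$: extending it over the disk would fix the core sphere $\{0\}\times S^{n-2}$ and destroy polarization. I would resolve this by patching, near the core, a second positive-dimensional orbit direction and, on the overlap $(D^2\setminus\{0\})\times S^{n-2}$, a commuting $T^2$-action, arranging that the resulting $\mathcal{T}$-structure on $D^2\times S^{n-2}$ has only positive-dimensional orbits and restricts to the original circle rotation on $S^1\times S^{n-2}=\partial(D^2\times S^{n-2})$. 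This step is the crux, and it is genuinely dimension-sensitive: when $S^{n-2}$ admits no free circle action the second orbit direction must be supplied by a more elaborate polarized model on the handle, which is precisely the content I would extract from \cite{[PP1]}; checking that all these local models glue to a global polarized $\mathcal{T}$-structure on $M(G)$ is where the bulk of the verification lies.

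Once a polarized $\mathcal{T}$-structure on $M(G)$ is in hand, the Cheeger--Gromov theorem \cite{[CG]} gives $\MinVol(M(G))=0$ and exhibits $M(G)$ as collapsing with bounded sectional curvature, which completes the argument.
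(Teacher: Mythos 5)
Your overall reduction---build a closed $n$-manifold with $\pi_1\cong G$ carrying a polarized $\mathcal{T}$-structure, then quote Cheeger--Gromov---is exactly the paper's strategy (Proposition \ref{Proposition PGPolarized} followed by Theorem \ref{Theorem CG}), and your odd-dimensional surgery picture is close in spirit to the paper's. But the construction has two genuine gaps. First, even in odd dimensions, the key step ``isotope each $\gamma_j$ onto an orbit of the local circle action'' is impossible in general: isotopy preserves the free homotopy class, and the orbits of the structure you describe on $N_k=\#_k(S^1\times S^{n-1})$ lie either in a summand, where they are freely homotopic to a conjugate of a single generator $x_i^{\pm 1}$, or in a neck region, where they are nullhomotopic in $N_k$; a relator such as $r=x_1x_2x_1^{-1}x_2^{-1}$ represents neither kind of class. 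Without invariance of the tubes, the local actions do not restrict to the complement of the surgery region, and you are left with no structure at all on the complementary piece. The paper avoids this problem rather than solving it: it invokes Dehn's theorem \cite{[MD]} to obtain a closed $(n-1)$-manifold $X(G)$ with $\pi_1\cong G$ (so all relators are already accounted for), and then performs a single surgery in $X(G)\times S^1$ on the product circle $\{x\}\times S^1$, which \emph{is} an orbit of the global free circle action on the $S^1$ factor; the handle $S^{n-2}\times D^2$ then carries a free circle action because $S^{n-2}$ is odd-dimensional.

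Second, and fatally, the even-dimensional case cannot be repaired by ``a more elaborate polarized model on the handle'' extracted from \cite{[PP1]}, because no such model exists. A closed manifold admitting a polarized $\mathcal{F}$-structure has $\MinVol=0$ by Theorem \ref{Theorem CG}, hence vanishing Euler characteristic by the Chern--Gauss--Bonnet bound $|\chi(M)|\leq c(n)\MinVol(M)$ of \cite{[G]}. This kills your plan twice over when $n$ is even: (i) the starting manifold already fails, since $\chi\bigl(\#_k(S^1\times S^{n-1})\bigr)=-2(k-1)\neq 0$ for $k\geq 2$, so the local rotations cannot ``patch across the connect-sum necks'' into a polarized structure (note that Proposition \ref{Proposition Connected} is stated only for odd-dimensional manifolds); and (ii) a polarized structure on the handle $D^2\times S^{n-2}$ compatible with the rotation of $\partial D^2$ along the boundary would glue to $S^1\times D^{n-1}$, equipped with its free rotation, to yield a polarized $\mathcal{F}$-structure on $S^n$ with $n$ even, contradicting $\chi(S^n)=2$. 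This obstruction is precisely why the surgery and connected-sum results of Paternain--Petean (\cite[Theorem 5.9]{[PP1]}) produce $\mathcal{T}$-structures that are \emph{not} polarized, and which therefore give only $\He=0$ and $\Vol_K=0$ as in Theorem \ref{Theorem PP1}, not $\MinVol=0$. The paper's even-dimensional construction is structurally different and is what your proposal would need to adopt: start from Gompf's $4$-manifold $Y(G)$ \cite{[Go]} containing suitable embedded tori, form $Y(G)\times S^1\times S^1$, and perform surgery along embedded $4$-tori (multiplicity-zero generalized logarithmic transforms), so that every piece---including the glued-in handles $D^2\times T^2\times S^1\times S^1$---carries a free circle action.
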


Cheeger-Rong \cite[Corollary 0.3]{[CR]} have shown that in some sense most manifolds with bounded sectional curvature have minimal volume zero.\\

The note is structured as follows. The definitions, machinery and main results that we build upon are recollected in Section \ref{Section Definitions}. Section \ref{Section TC} contains the definition of $\mathcal{F}$-structures, the principal tool of this paper, as well as several useful examples and constructions that include all $\mathcal{F}$-structures of Theorem \ref{Theorem P}, its corollary, and Theorem \ref{Theorem FundGroup}. The definitions of the Riemannian invariants considered in this paper and the results we build upon are collected in Section \ref{Section CG}. The vanishing and non-realization of the Yamabe invariant follows from the main results of Section \ref{Section CG} and Section \ref{Section NY}. Existence of $\mathcal{F}$-structures on exotic spheres and homotopy real projective spaces is considered in Section \ref{Section Spheres}. Homotopy Euclidean space forms are studied in Section \ref{Section Tori}. It contains a description of the results of surgery theory that we employ. The proofs of Theorem \ref{Theorem P}, Corollary \ref{Corollary P}, Proposition \ref{Proposition P} and Theorem \ref{Theorem FundGroup} are given in Section \ref{Section Proof P}.

\section{Definitions and background results}{\label{Section Definitions}}

We collect in this section the definitions and main results used in this manuscript in order to make it as self-contained as possible. 

\subsection{$\mathcal{F}$-structures and constructions}{\label{Section TC}} The definition of an $\mathcal{F}$-structure below was given in \cite[Section 2]{[PP3]} and it is equivalent to the original one introduced by Cheeger-Gromov \cite{[G], [CG]}.

\begin{definition}{\label{Definition T-str}}An \emph{$\mathcal{F}$-structure} on a smooth closed manifold $M$ is given by 
\begin{enumerate}
\item a finite open cover $\{U_1, \ldots, U_N\}$ of $M$;
\item a finite Galois covering $\pi_i: \widetilde{U_i}\rightarrow U_i$ with $\Gamma_i$ a group of deck transformations for $1\leq i \leq N$;
\item a smooth effective torus action with finite kernel of a $k_i$-dimensional torus \begin{center}
$\phi_i: T^{k_i}\rightarrow \Diff(\widetilde{U_i})$\end{center}for $1\leq i \leq N$;
\item a representation $\Phi_i: \Gamma_i \rightarrow \Aut(T^{k_i})$ such that \begin{center}$\gamma(\phi_i(t)(x)) = \phi_i(\Phi_i(\gamma)(t))(\gamma x)$ \end{center}for all $\gamma \in \Gamma_i$, $t\in T^{k_i}$, and $x\in \widetilde{U}_i$;
\item for any subcollection $\{U_{i_1}, \ldots, U_{i_l}\}$ that satisfies \begin{center}$U_{i_1\cdots i_l}:= U_{i_1} \cap \cdots\cap$ $U_{i_l} \neq \emptyset$,\end{center} the following compatibility condition holds: let $\widetilde{U}_{i_1\cdots i_l}$ be the set of all points $(x_{i_1}, \ldots, x_{i_l}) \in \widetilde{U}_{i_1}\times \cdots \times \widetilde{U}_{i_l}$ such that $\pi_{i_1}(x_{i_1}) = \cdots = \pi_{i_l}(x_{i_l})$. The set $\widetilde{U}_{i_1\cdots i_l}$ covers $\pi^{-1}(U_{i_1\cdots i_l}) \subset \widetilde{U}_{i_1\cdots i_l}$ for all $1\leq j \leq l$. It is required that $\phi_{i_j}$ leaves $\pi^{-1}_{i_j}(U_{i_1\cdots i_l})$ invariant, and it lifts to an action on $\widetilde{U}_{i_1\cdots i_l}$ such that all lifted actions commute.
\item An $\mathcal{F}$-structure is called a \emph{$\mathcal{T}$-structure} if the Galois coverings $\pi_i: \widetilde{U}_i \rightarrow U_i$ in Item (1) can be taken to be trivial for every $i$.
\item The $\mathcal{F}$-structure is said to be polarized if the torus action $\phi_i$ of Item (3) is fixed-point free $\forall$ open set $U_i$.
\item The $\mathcal{F}$-structure is pure if the dimension of the orbit of the torus action $\phi_i$ of Item (3) is constant for every point $x\in \widetilde{U}_i$ and every $i$.
\end{enumerate}
\end{definition}

A non-trivial circle action yields a $\mathcal{T}$-structure. We now discuss two warm-up examples.

\begin{example}\label{Example SR}Non-pure $\mathcal{T}$-structure on $S^n$ and non-pure $\mathcal{F}$-structure on $\R P^n$ that are polarized if $n$ is odd. \emph{Write the n-sphere as \begin{equation}\label{Decomposition S1}S^n = \partial D^{n + 1} = \partial D^2\times D^{n - 1} = (S^1\times D^{n - 1}) \cup_{\Id} (D^2\times S^{n - 2})\end{equation}where the pieces on the right hand side of (\ref{Decomposition S1}) are identified along the common boundary $S^1\times S^{n - 2}$ by the identity map. We use this decomposition of $S^n$ to construct a $\mathcal{T}$-structure as follows. Define as the covers of Items (1) and (2) of Definition \ref{Definition T-str}\begin{equation}\label{Set1}\widetilde{U}_1 = U_1:= S^1\times D^{n - 1}\end{equation}and\begin{equation}\label{Set2}\widetilde{U}_2 = U_2:= D^2\times S^{n - 2}.\end{equation}These choices imply that Item (4) does not need to be checked. Regarding Item (3), we have the following. The circle acts by rotations on the $S^1$ factor of $U_1$, and we label it \begin{equation}\label{CircEx1}\phi_1:S^1\rightarrow \Diff(U_1).\end{equation}This is a free action. Take a circle action on the $S^{n - 2}$-factor of $U_2$ and label it \begin{equation}\label{CircEx2}\phi_2:S^1\rightarrow \Diff(U_2).\end{equation}If $n$ is odd, then $\phi_2$ can be chosen to be fixed point free, i.e., as the Hopf action. There is a $T^2$-action on the common boundary induced by $\phi_1$ and $\phi_2$, hence Item (5) is verified. Our choices (\ref{Set1}) and (\ref{Set2}) comply with Item (6) in the definition. We have thus constructed a non-pure $\mathcal{T}$-structure on the n-sphere that is polarized in the odd-dimensional case.}

\emph{The real projective n-space is the orbit space $S^n/T$ of the antipodal involution $T: S^n\rightarrow S^n$ that identifies $x\mapsto -x$ for $x\in S^n$. The ecomposition (\ref{Decomposition S1}) yields \begin{equation}\R P^n = (D^{n - 1}\widetilde{\times} S^1) \cup_{\Id} (D^2\widetilde{\times} \R P^{n - 2})\end{equation}We point out the following abuse of notation that will carry for the remaining discussion of this example. If $n$ is even, the symbol $D^{n - 1}\widetilde{\times} S^1$ denotes the nonorientable (n - 1)-disk bundle over the circle, and $D^2\widetilde{\times} \R P^{n - 2}$ is the twisted nonorientable 2-disk bundle over the real projective (n - 2)-space. In the orientable case, i.e., when $n$ is odd, the symbol $D^{n - 1}\widetilde{\times} S^1$ simply denotes the trivial bundle $D^{n - 1}\times S^1$, while $D^2\widetilde{\times} \R P^{n - 2}$ is the twisted orientable bundle. A construction of an $\mathcal{F}$-structure on $\R P^n$ suggested by these decompositions is as follows. Define\begin{equation}U_1:=D^{n - 1}\widetilde{\times} S^1 = (S^1\times D^{n - 1})/T\end{equation} and \begin{equation}U_2:= D^2\widetilde{\times} \R P^{n - 2} = (D^2\times S^{n - 2})/T.\end{equation}Their covers $\widetilde{U}_1$ and $\widetilde{U}_2$ are respectively given in (\ref{Set1}) and (\ref{Set2}) with $\Gamma_i = \Z/2$ as in Items (1) and (2) in Definition \ref{Definition T-str}. We choose the same circle actions $\phi_1$ and $\phi_2$ as in (\ref{CircEx1}) and (\ref{CircEx2}), and Items (3) and (5) hold. The Deck transformation given by the involution $T$ is the antipodal map on each piece. We use it to definite the homeomorphism $\Phi_i: \Z/2\rightarrow \Aut(S^1) \cong \Z/2$ as $t\mapsto - t$ for $t\in \Gamma_i$ as in Item (4). Moreover, both $\phi_1$ and $\phi_2$ commute with $T$. If $n$ is odd, we can choose $\widetilde{U}_1 = U_1$ and $\phi_2$ is free.  We have then constructed a non-pure $\mathcal{F}$-structure on the real projective n-space that is polarized if its dimension is odd.}

\end{example}

\begin{example}{\label{Example MY}}Pseudofree circle actions give rise to polarized $\mathcal{T}$-structures. \emph{A circle action $\phi:S^1\rightarrow \Diff(M)$ on a smooth manifold $M$ is said to be pseudofree if it is not free, every orbit is one-dimensional, and if the isotropy group of $\phi$ is the identity except for isolated exceptional orbits, where it is a finite cyclic group \cite{[MoYa]}. Since all isotropy groups of such an action are discrete, the action is locally-free.}
\end{example}

Our constructions of $\mathcal{F}$-structures in this section continue with a proof of the first claim of Theorem \ref{Theorem FundGroup}. 

\begin{proposition}{\label{Proposition PGPolarized}} Let $G$ be a finitely presented group and suppose $n\geq 5$. There exists a closed smooth $n$-manifold $X(G)$ with fundamental group $\pi_1(X(G))\cong G$ that admits a polarized $\mathcal{T}$-structure.
\end{proposition}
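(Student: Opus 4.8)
The plan is to reduce the statement to the construction of a closed smooth $n$-manifold with fundamental group $G$ carrying a free circle action. Indeed, a smooth fixed-point free circle action $\phi\colon S^1\to \Diff(X)$ furnishes a polarized $\mathcal{T}$-structure immediately: take the single chart $U_1=\widetilde{U}_1 = X$ with trivial Galois covering and torus action $\phi_1=\phi$, so that all conditions of Definition \ref{Definition T-str} hold trivially, while fixed-point freeness is exactly the polarization condition of Item (7). This is the point of the remark following Definition \ref{Definition T-str} that a non-trivial circle action yields a $\mathcal{T}$-structure, and it is the phenomenon recorded for pseudofree (hence locally free) actions in Example \ref{Example MY}; a free action is in particular fixed-point free. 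So it suffices to realize $G$ as the fundamental group of the total space of a principal $S^1$-bundle.

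First I would realize $G$ on the base. Since $n-1\geq 4$, the standard surgery construction produces a closed smooth $(n-1)$-manifold $B_0$ with $\pi_1(B_0)\cong G$: starting from the connected sum $\#_k(S^1\times S^{n-2})$ of $k$ copies, where $k$ is the number of generators in a finite presentation of $G$, its fundamental group is the free group $F_k$, and surgery along disjointly embedded loops representing the relators (possible since loops have trivial normal bundle in dimension $\geq 4$) yields $\pi_1\cong G$. To manufacture a spherical two-dimensional cohomology class without disturbing the fundamental group, set $B:=B_0\,\#\,(S^2\times S^{n-3})$; since $n\geq 5$ the summand $S^2\times S^{n-3}$ is simply connected and van Kampen gives $\pi_1(B)\cong G\ast 1\cong G$. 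Let $\beta\in H^2(B;\Z)$ be the class Poincar\'e dual to $[\{\mathrm{pt}\}\times S^{n-3}]$, extended by zero across $B_0$, and let $X(G)$ be the total space of the principal $S^1$-bundle over $B$ with Euler class $\beta$. Then $X(G)$ is a closed smooth $n$-manifold carrying the free principal circle action.

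It remains to compute the fundamental group, which is the crux of the argument. The homotopy exact sequence of the bundle $S^1\to X(G)\to B$ reads
\[
\pi_2(B)\xrightarrow{\ \partial\ }\pi_1(S^1)=\Z\to \pi_1(X(G))\to \pi_1(B)\to 0,
\]
and for a principal circle bundle the connecting map is evaluation of the Euler class against the Hurewicz image, $\partial[f]=\langle\beta, f_\ast[S^2]\rangle$. Taking $f$ to be the inclusion of the sphere $S^2\times\{\mathrm{pt}\}\subset S^2\times S^{n-3}$, chosen away from the connected-sum region, gives $\langle\beta, f_\ast[S^2]\rangle=1$, so $\partial$ is onto. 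Hence the fiber map $\pi_1(S^1)\to\pi_1(X(G))$ is zero and the sequence collapses to an isomorphism $\pi_1(X(G))\cong\pi_1(B)\cong G$. Combined with the first paragraph, the free circle action on $X(G)$ is the desired polarized $\mathcal{T}$-structure. I expect the delicate point to be precisely this last computation: the naive product $S^1\times B$ would only yield $\Z\times G$, and it is the nontrivial Euler class $\beta$, which makes the fiber null-homotopic, that trims the central $\Z$ and pins the fundamental group to $G$ exactly.
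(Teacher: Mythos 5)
Your proof is correct, and it takes a genuinely different route from the paper's. You reduce the statement to producing a closed $n$-manifold with fundamental group $G$ carrying a free circle action (which indeed gives a one-chart polarized $\mathcal{T}$-structure, consistent with Definition \ref{Definition T-str} and the locally free actions of Example \ref{Example MY}), and you realize it as the total space of a principal $S^1$-bundle over $B = B_0 \# (S^2\times S^{n-3})$ whose Euler class pairs to $1$ with the spherical class $[S^2\times\{\mathrm{pt}\}]$, so that the connecting homomorphism $\pi_2(B)\to\pi_1(S^1)$ is onto and the fiber class dies in $\pi_1$, giving $\pi_1(X(G))\cong G$. The paper instead proceeds by cut-and-paste: for odd $n$ it does surgery on the loop $\{x\}\times S^1\subset X(G)\times S^1$ and equips the two pieces $X(G)\times S^1\setminus(D^{n-1}\times S^1)$ and $S^{n-2}\times D^2$ with free circle actions matching along the common boundary (this is exactly where odd $n$ is needed, to have a free action on the odd-dimensional $S^{n-2}$); for the general case it performs torus surgeries (high-dimensional multiplicity-zero logarithmic transforms) on $Y(G)\times S^1\times S^1$, with $Y(G)$ Gompf's $4$-manifold, producing a three-chart structure with a $T^3$-action on the boundaries. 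Your approach buys uniformity in all dimensions $n\geq 5$ and a stronger conclusion — a globally defined free circle action, hence a pure, polarized, single-chart $\mathcal{T}$-structure — while the paper's approach exercises the compatibility conditions of Definition \ref{Definition T-str} on nontrivial overlaps and introduces the torus-surgery technique that recurs elsewhere in the paper. The only step you should justify explicitly is the formula $\partial[f]=\langle\beta,f_{\ast}[S^2]\rangle$ for the connecting homomorphism of a principal circle bundle; it follows by naturality of the homotopy exact sequence under pulling the bundle back over the embedded $S^2$, where it becomes the Euler-number-one bundle $S^3\to S^2$.
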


A classical result of Dehn states the existence of a smooth closed orientable $n$-manifold $X(G)$ whose fundamental group is isomorphic to any finitely presented group $G$ for every integer $n\geq 4$ (cf. \cite{[MD]}). It trivially follows that for any $k\geq 7$ and $k\neq 8$ there is a $k$-manifold $M(G)$ with arbitrary finitely presented fundamental group that admits a polarized $\mathcal{T}$-structure by taking \begin{equation}{\label{ProductTrivial}}M(G):= X(G)\times \underbrace{S^3\times \cdots \times S^3}_{r} \times \underbrace{S^2\times \cdots \times S^2}_{s}\end{equation} with $r\in \N$ and $s\in \Z_{\geq 0}$ and equip it with a free circle action on a 3-sphere factor. In order to prove Proposition \ref{Proposition PGPolarized}, we need to construct such a manifold in dimensions five, six, and eight. We proceed to do so using cut-and-paste constructions in every dimension, thus yielding a different manifold and $\mathcal{T}$-structure to the one described in (\ref{ProductTrivial}). We give two constructions of $\mathcal{T}$-structures arising from surgery along loops and from surgery along embedded tori. Each piece in the assembly is equipped with a torus action and all these actions commute along the common boundary where the pieces are glued together as it was sampled in Example \ref{Example SR}.

\begin{proof}  We first discuss the construction on an odd-dimensional manifold with arbitrary finitely presented fundamental group using surgery along loops. Let $X(G)$ be a closed smooth $(n - 1)$-manifold of even dimension and $\pi_1(X(G))\cong G$ \cite{[MD]} and take the odd-dimensional $n$-manifold $X(G)\times S^1$ with fundamental group $G\times \Z$ for $n\geq 5$. A tubular neighborhood of the circle factor $\{x\}\times S^1\subset X(G)\times S^1$ is diffeomorphic to $D^{n - 1}\times S^1$. We carve out this tubular neighborhood and construct the closed smooth $n$-manifold\begin{equation}{\label{Gluing}} M(G):= (X(G)\times S^1 \backslash D^{n - 1}\times S^1) \cup_{\Id} (S^{n - 2}\times D^2),\end{equation} where the pieces are identified along their common $S^{n - 2}\times S^1$ boundaries using the identity map. The Seifert-van Kampen theorem implies that the fundamental group of $M(G)$ is isomorphic to $G$. The isotopy class of the loop $\{x\}\times S^1\subset X(G)\times S^1$ is the representative for the homotopy class of the generator of the infinite cyclic factor in the group $\pi_1(X(G)\times S^1) = G\times \Z t$. The induced identification from (\ref{Gluing}) sets $t = 0$ in $\pi_1(M(G))$ and this group is isomorphic to $G$.  

The choice of coverings and circle actions of Definition \ref{Definition T-str} are the following. We pick the first piece of $M(G)$ as the complement of the aforementioned tubular neighborhood inside $X(G)\times S^1$, and set\begin{equation}\widetilde{U}_1 = U_1:= X(G)\times S^1\backslash(D^{n - 1}\times S^1).\end{equation} It has an induced circle action $\phi_1: S^1\rightarrow \Diff(U_1)$ from the free circle action on the $S^1$ factor of $X(G)\times S^1$. The second piece is chosen as\begin{equation}\widetilde{U}_2 = U_2:= S^{n - 2}\times D^2.\end{equation} We take a free circle action $\phi_2:S^1\rightarrow \Diff(S^{n - 2}\times D^2)$ induced from a free circle action on the sphere $S^{n - 2}\times \{d\}\subset S^{n - 2}\times D^2$ for $d\in D^2$, which is odd-dimensional. This is the only part of the proof that requires $n$ to be odd. These considerations verify Items (1), (2), and (3) of Definition \ref{Definition T-str}. Notice that since the Galois covering of Item (2) is trivial, Item (4) holds automatically. Item (5) holds due to our choices of circle actions. There is a $T^2$-action $(\phi_2, \phi_1)$ on the $S^{n - 2}\times S^1$ boundary of each piece; each circle action acts on a factor. Our choice of covering satisfies condition of Item (6). Since both actions $\phi_1$ and $\phi_2$ are fixed-point free, the condition in Item (7) holds and we have constructed a polarized $\mathcal{T}$-structure on an n-dimensional manifold $M(G)$ with finitely presented fundamental group $G$ for every odd dimension $n\geq 5$. 

We finish the proof of Proposition \ref{Proposition PGPolarized} describing the construction of $\mathcal{T}$-structures using surgeries along embedded tori. This cut-and-paste technique not only settles the even-dimensions case, but it also provides a proof of the proposition in all dimensions. We describe the construction for the 6-dimensional case for the sake of a clear exposition, and we proceed to do so. Gompf \cite[Theorem 10.2.10]{[Go]} constructed a closed smooth 4-manifold $Y(G)$ with $\pi_1(Y(G)) \cong G$ and that contains two disjoint 2-tori $\{T_1, T_2\}$ of self-intersection zero. Their tubular neighborhoods $\nu(T_i)$ $i = 1, 2$ are diffeomorphic to $D^2\times T^2$ and since each torus intersects a 2-sphere inside $Y(G)$ \cite[Theorem 10.2.10]{[Go]}, we have $\pi_1(Y(G) \backslash (\nu(T_1)\cup \nu(T_2)))\cong \pi_1(Y(G))$. Consider the 6-dimensional manifold $Y(G)\times S^1\times S^1$ with fundamental group $G\times \Z \times \Z$. 

The manifold $M(G)$ is obtained by carving out two disjoint embedded 4-tori\begin{equation}T^4_i:= T^2_i\times S^1\times S^1\end{equation} inside $Y(G)\times S^1\times S^1$ and gluing them back in with an specified choice of diffeomorphism of the common boundary. A tubular neighborhood $\nu(T^4_i)$ is diffeomorphic to\begin{equation}D^2 \times T^2\times S^1\times S^1\end{equation} since $T^4_i$ has zero self-intersection for $i = 1, 2$. We build our manifold as\begin{equation}\label{CPConstruction}M(G):= (Y(G)\times S^1\times S^1\backslash\nu(T^4_1)\cup \nu(T^4_2))\cup (\underset{i= 1}{\overset{2}\bigsqcup} (D^2_i\times T^2\times S^1\times S^1))\end{equation} with the following choices of diffeomorphisms\begin{equation}{\label{GluingTori}}\varphi_i:\partial D_i^2\times T^2\times S^1 \times S^1\longrightarrow \partial D^2\times T^2\times S^1\times S^1\end{equation}of the boundary. Notice that the subindex associated to the 2-disk as $D^2_i$ for $i = 1, 2$ discerns the carved torus $T^4_i$ that is being glued back in. We choose the diffeomorphism $\varphi_1: \partial D_1^2\times T^2\times S^1\times S^1 \rightarrow \partial D^2\times T^2\times S^1\times S^1$ that identifies $\partial D^2$ with the circle factor $\{\theta_0\}\times \{t\}\times S^1\times \{\theta_2\}$ for $\theta_j\in S^1$ and $t\in T^2$. Analogously, we use the diffeomorphism $\varphi_2: \partial D^2_2\times T^2\times S^1\times S^1 \rightarrow \partial D^2\times T^2\times S^1\times S^1$ that identifies $\partial D^2$ with the circle factor $\{\theta_0\}\times \{t\}\times \{\theta_1\}\times S^1$. These operations are high-dimensional generalization of the multiplicity zero logarithmic transformations of 4-manifolds \cite{[Go]}.

We use the Seifert-van Kampen theorem to conclude that the fundamental group of $M(G)$ is isomorphic to $G$ as follows. Both meridians of the 4-tori that we removed are nullhomotopic in the complement and thus we have an isomorphism $\pi_1(Y(G)\times S^1\times S^1\backslash (\nu(T^4_1)\cup \nu(T^4_2)) \cong \pi_1(Y(G)))\times \Z t \times \Z s$. Each isotopy class of the two loops $\{y\}\times S^1\times \{\theta_2\}\subset Y(G)\times S^1\times S^1$ and $\{y\}\times \{\theta_1\}\times S^1\subset Y(G)\times S^1\times S^1$ is the representative for the homotopy class of the generator $t$ and $s$of the infinite cyclic factor in the group $\pi_1(Y(G)\times S^1\times S^1) = G\times \Z t \times \Z s$. The induced identifications from the specified diffeomorphisms (\ref{GluingTori}) set $t = 0$ and $s = 0$ in the group $\pi_1(M(G))$, which implies that it is isomorphic to $G$.  

We construct the polarized $\mathcal{T}$-structure as follows. The open sets of the coverings of Items (1) and (2) in Definition \ref{Definition T-str} are defined following the deconstruction of $M(G)$ in (\ref{CPConstruction}) as\begin{equation}\widetilde{U}_1 = U_1:= Y(G)\times S^1 \times S^1\backslash((D^2\times T_1\times S^1\times S^1) \cup (D^2\times T_2\times S^1\times S^1)),\end{equation}\begin{equation}\widetilde{U}_2 = U_2:= D^2\times T_1\times S^1\times S^1 = D^2\times T^2\times S^1\times S^1,\end{equation}and\begin{equation}\widetilde{U}_3 = U_3:= D^2\times T_2\times S^1\times S^1  = D^2\times T^2\times S^1\times S^1.\end{equation} We choose a free circle action on each $U_i$ as follows. On $U_1$, we pick the free circle action $\phi:S^1\rightarrow \Diff(U_1)$ induced by the free action of the circle factor $\{y\}\times S^1\times \{\theta_2\}\subset Y(G)\times S^1\times S^1$ for $y\in Y(G)$ and $\theta_2\in S^1$ acting on itself. We choose $\phi_2: S^1\rightarrow \Diff(D^2\times T^2\times S^1\times S^1)$ to be a circle action on the first circle of the 2-torus factor of $\{d\}\times T^2 \times \{\theta_1\}\times \{\theta_2\} = \{d\}\times (S^1\times S^1)\times \{\theta_1\}\times \{\theta_2\}\subset U_2$ for $d\in D^2$ and $\theta_j\in S^1$. Similarly, $\phi_3: S^1\rightarrow \Diff(D^2\times T^2\times S^1\times S^1)$ is chosen to be the circle action of the second circle of the $T^2$ factor in $U_3$ acting on itself. This verifies Item (3) in Definition \ref{Definition T-str}, and Item (4) does not need to be considered in this case since $\Gamma_i = \{1\}$.

The actions $\phi_i$ for $i = 1, 2, 3$ paste together to a $T^3$-action on the boundary $S^1\times T^2\times S^1\times S^1$, hence the condition of Item (5) is met and so is the on of Item (6). Since all three actions are free, Item (7) is satisfied and we have constructed a polarized $\mathcal{T}$-structure on a closed smooth 6-manifold $M(G)$ with fundamental group $G$. As we have mentioned, the argument presented is readily extended to dimension greater or equal to five. This concludes the proof of the proposition.
\end{proof}

Soma \cite{[TSo]} showed that existence of polarized $\mathcal{T}$-structures on 3-manifolds is closed under connected sums, and Gromov \cite[Appendix 2]{[G]} (cf. \cite[Example A.1]{[CG]}) remarked that the result holds in all odd dimensions. With broader generality, Paternain-Petean have shown that the existence of $\mathcal{T}$-structures is closed under connected sums of closed manifolds of dimensions greater than two \cite[Theorem 5.9]{[PP1]}.

\begin{proposition}{\label{Proposition Connected}} Soma, Gromov. Let $X$ and $Y$ be closed smooth $(2k + 1)$-manifolds with $k\in \N$ that admit a polarized $\mathcal{F}$-structure. Assume that the open cover of each $\mathcal{F}$-structure contains at least one open set with trivial Galois covering. Their connected sum $X\#Y$ admits a polarized $\mathcal{F}$-structure.
\end{proposition}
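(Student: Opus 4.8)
The plan is to build the polarized $\mathcal{F}$-structure on $X\#Y$ from three families of charts: those inherited from $X$, those inherited from $Y$, and a family supported on the connecting neck that is modeled on the polarized $\mathcal{T}$-structure of the odd sphere constructed in Example \ref{Example SR}. Recall that forming $X\#Y$ amounts to deleting an open ball $B_X\subset X$ and an open ball $B_Y\subset Y$ and inserting a neck $S^{n-1}\times[0,1]$, so that $X\#Y=(X\setminus B_X)\cup(S^{n-1}\times[0,1])\cup(Y\setminus B_Y)$. Since the compatibility requirement of Item (5) in Definition \ref{Definition T-str} is local along overlaps, it suffices to produce the three chart families together with torus actions that commute on the collar regions where they meet.

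First I would use the hypothesis that each cover contains an open set with trivial Galois covering. Let $U\subset X$ be such a set; there the $\mathcal{F}$-structure is a genuine, fixed-point-free torus action $\phi\colon T^r\to\Diff(U)$. Choosing a point $p\in U$ and a circle subgroup $S^1\subset T^r$ acting locally freely near $p$, the slice theorem yields an invariant tube diffeomorphic to $S^1\times D^{n-1}$ on which the circle acts by rotation of the $S^1$-factor (after shrinking to remove finite isotropy). I place $B_X$ inside this tube, and do the same for $Y$ to obtain a free longitudinal circle near $B_Y$. The point of insisting on a \emph{trivial} Galois covering is exactly that the matching circle on the neck must be an honest circle action on the open set, not one defined only upstairs and twisted by a representation $\Phi_i$; working in a $\mathcal{T}$-chart removes this obstruction.

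The key step is the neck. I realize the neck as $S^n$ with two open balls deleted, both placed inside the piece $S^1\times D^{n-1}$ of the decomposition \eqref{Decomposition S1}. Restricting the polarized $\mathcal{T}$-structure of Example \ref{Example SR} to this region gives a polarized $\mathcal{F}$-structure on the neck: near each of the two boundary spheres one sees the longitudinal circle acting on the $S^1$-factor, while the untouched piece $D^2\times S^{n-2}$ carries the fixed-point-free Hopf action. Here the hypothesis $n=2k+1$ is essential and enters in exactly one place: $S^{n-2}$ is then odd-dimensional and admits the free Hopf circle action, so the $D^2\times S^{n-2}$ piece is fixed-point free and the entire neck structure is polarized. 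I then glue so that the longitudinal circle at the two ends of the neck agrees with the circles chosen near $B_X$ and $B_Y$; because these are literally the same circle subgroups (restrictions of the ambient $T^r$-actions), the commutation required by Item (5) holds automatically on the collars, and on the single internal overlap of the neck it is the $T^2$-compatibility already verified in Example \ref{Example SR}.

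The main obstacle is the neck, and specifically the fact that the connecting sphere $S^{n-1}=S^{2k}$ is even-dimensional and admits no global free circle action; one cannot simply extend a single longitudinal circle across the neck. This is resolved as above by splitting the neck into the two pieces of \eqref{Decomposition S1} and exploiting the odd-dimensional subsphere $S^{n-2}$ to supply a fixed-point-free action precisely where the longitudinal circle degenerates. Once the three chart families and their actions are in place, I would finish by checking Items (1)--(7) of Definition \ref{Definition T-str}: Items (1)--(4) are immediate (the $X$- and $Y$-charts retain their data and the neck charts have trivial Galois covering), Item (5) is the commutation discussed, Item (6) is the chosen cover, and Item (7) holds because every torus action used is fixed-point free, so the resulting $\mathcal{F}$-structure on $X\#Y$ is polarized.
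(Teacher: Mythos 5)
There is a genuine gap, and it sits exactly at the heart of the proposition: every region your construction deletes is an open \emph{ball}, and balls are never invariant under the circle actions you are using. In Definition \ref{Definition T-str} the action $\phi_i$ is an action on the chart $\widetilde{U}_i$ itself, so a chart of an $\mathcal{F}$-structure must be preserved by its torus action. Inside the tube $S^1\times D^{n-1}$, where your chosen circle acts by rotating the $S^1$-factor, an invariant open set is a union of orbits, i.e.\ of the form $S^1\times A$ with $A\subset D^{n-1}$; such a set is never simply connected, hence never a ball. Consequently neither $B_X$ nor its complement in the tube is invariant, and the same applies to the two balls you delete from the $S^1\times D^{n-1}$ piece of (\ref{Decomposition S1}) to form the neck. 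So the phrase ``restricting the polarized $\mathcal{T}$-structure of Example \ref{Example SR} to this region'' does not define any structure at all: the longitudinal circle moves points across the boundary spheres of the deleted balls, there is no induced action on $(S^1\times D^{n-1})\setminus(B_1\cup B_2)$, on $X\setminus B_X$, or on $Y\setminus B_Y$ near the gluing locus, and the Item (5) compatibility you invoke cannot even be formulated there. You locate the difficulty in the evenness of $S^{2k}$, but the real obstruction is this invariance problem; it is precisely why connected sums and $\mathcal{F}$-structures do not obviously mix.

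The paper's proof (following Paternain--Petean) is engineered so that a ball is never deleted. One removes the invariant solid tori $S^1\times D_x\subset X$ and $S^1\times D_y\subset Y$ wholesale, and then realizes the connected sum by the nontrivial diffeomorphism
\begin{equation*}
(S^1\times D_x)\,\#\,(S^1\times D_y)\;\cong\;\bigl(S^1\times D_x\setminus(S^{2k-1}\times D^2)\bigr)\cup_{\partial}(S^1\times D_y),
\end{equation*}
where $S^{2k-1}\times D^2$ is a tubular neighborhood of a slice sphere $\{pt\}\times S^{2k-1}$ inside the shell $S^1\times S^{2k-1}\times[\epsilon_1,\epsilon_2]\subset S^1\times D_x$, and the boundary $S^{2k-1}\times S^1$ created by carving it out is identified with $\partial(S^1\times D_y)=S^1\times S^{2k-1}$. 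The removed set $S^{2k-1}\times D^2$ \emph{is} invariant under the circle action on its $S^{2k-1}$-factor (this is where the odd-dimensionality of $S^{2k-1}$ enters, as in your outline), so each of the three pieces (\ref{Piece 1}), (\ref{Piece 2}), (\ref{Piece 3}) carries an honest polarized structure, and along each interface $S^1\times S^{2k-1}$ the two circle actions act on different factors and assemble into a $T^2$-action, verifying Item (5). Your outline becomes a proof only once you establish this identification (or an equivalent one) of the connected sum with a union of invariant pieces; that diffeomorphism is the actual content of the argument, and it is the step your proposal skips.
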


The following proof of Proposition \ref{Proposition Connected} is due to to Paternain-Petean \cite[Proof of Theorem 5.9]{[PP1]}; it is a word by word repetition of their argument modulo a small tweak to justify the addition of the adjective 'polarized'. The proof is included for the sake of completeness.

\begin{proof} Paternain-Petean's construction of the polarized $\mathcal{F}$-structure on $X\#Y$ is as follows. The connected sum $X\# Y$ is deconstructed into a union of three pieces. Each piece is equipped with a polarized $\mathcal{F}$-structure such that the $\mathcal{T}$-structures induced on adjacent boundary components are compatible with each other (see Definition \ref{Definition T-str}). The three polarized $\mathcal{F}$-structures then glue together to a (global) polarized $\mathcal{F}$-structure on $X\# Y$. 

Begin by finding embedded solid tori \begin{center} $S^1\times D_x\hookrightarrow X$ and $S^1\times D_y\hookrightarrow Y$,\end{center} where $D_x$ and $D_y$ are small $2k$-balls centered at points $x\in X$ and $y\in Y$ respectively. We describe this step of the procedure just for $X$ given that it is the very same for $Y$. If needed, modify the $\mathcal{F}$-structure on $X$  and take a point $x\in X$ contained in a single open set $U_j$ (for a fixed $j$) of the polarized $\mathcal{F}$-structure that has trivial Galois covering as in Item (2) of Definition \ref{Definition T-str}. Without loss of generality, we can assume that the corresponding torus $T^{k_j}$ acting on $U_j$ is a circle, and that the point $x$ is contained in a regular orbit. The small $2k$-ball $D_x$ centered at $x$ is taken to be transverse to this circle action. The embedded solid torus $S^1\times D_x$ is obtained as the union of orbits through $D_x$. At this point of the proof we have located embedded solid tori inside $X$ and $Y$.

The connected sum is to be performed inside these tori. We use the existence of a diffeomorphism between $S^1\times D_x\#S^1\times D_y$ and $(S^1\times D_x \backslash S^{2k - 1}\times D^2),$ where the boundary component coming from the $S^{2k - 1}\times D^2$ piece that was carved out is identified with the boundary $\partial(S^1\times D_y) = S^1\times S^{2k - 1}$. To see the decomposition, split the $2k$-disk around $x$ as \begin{equation}D_x = D_{\epsilon_1}\cup (S^{2k - 1}\times [\epsilon_1, \epsilon_2]),\end{equation} where $D_{\epsilon_1}$ for small constants $\epsilon_1, \epsilon_2$. The 2-disk factor $\{pt\}\times D^2\subset S^{2k - 1}\times D^2$ is a small 2-disk around a point contained in the middle of $S^{2k - 1}\times [\epsilon_1, \epsilon_2]$ and transverse to $\{pt\}\times S^{2k - 1}\subset S^1\times S^{2k - 1}\times [\epsilon_1, \epsilon_2]$.

The manifold $X\# Y$ is constructed as the union of the pieces \begin{equation}{\label{Piece 1}}(X \backslash S^1\times D_x)\cup S^1\times D_{\epsilon}\end{equation} \begin{equation}{\label{Piece 2}}S^1\times S^{2k - 1}\times [\epsilon_1, \epsilon_2] \backslash (S^{2k - 1}\times D^2)\end{equation}and \begin{equation}{\label{Piece 3}}Y\backslash S^1\times D_y.\end{equation} The choices of $\mathcal{F}$-structures are the following. Equip pieces (\ref{Piece 1}) and (\ref{Piece 3}) with the initial $\mathcal{F}$-structure, which is polarized by hypothesis. Take a free circle action on the $S^{2k - 1}$ factor of $S^1\times S^{2k - 1}\times [\epsilon_1, \epsilon_2] \backslash (S^{2k - 1}\times D^2)$. This is the point of the proof where the hypothesis on the dimension of the manifolds to be $2k + 1 \geq 3$ is used. This equips piece (\ref{Piece 2}) with a polarized $\mathcal{T}$-structure. To see that these structures are compatible on the boundary components, we argue as follows. The action induced on each boundary component of pieces (\ref{Piece 1}), (\ref{Piece 2}), and (\ref{Piece 3}) by our choices of $\mathcal{F}$-structures pastes together to the canonical action on the circle factor into a $T^2$-action. Hence, the conditions of Definition \ref{Definition T-str} are satisfied, and the connected sum $X\# Y$ has a polarized $\mathcal{F}$-structure. 
\end{proof}

We sample in the following corollary the utility of the previous proposition to equip inequivalent smoothings with a polarized $\mathcal{F}$-structure. 

\begin{corollary}{\label{Corollary HRPN}}

\begin{itemize}

\item There are 28 inequivalent smoothings of $\R P^7$ that admit a polarized $\mathcal{T}$-structure. 

\item Any homeomorphism class that is realized by a closed flat 7-manifold has 28 inequivalent smoothings that admit a polarized $\mathcal{F}$-structure.\end{itemize}
\end{corollary}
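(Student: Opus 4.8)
The plan is to realize the $28$ smoothings explicitly as connected sums with the group of homotopy $7$-spheres and then transport the polarized structures across each connected sum by Proposition \ref{Proposition Connected}. Recall from Kervaire--Milnor that $\Theta_7 \cong \Z/28$, that every homotopy $7$-sphere is homeomorphic to $S^7$, and that $\Theta_7$ is generated under connected sum by a single Milnor sphere $\Sigma_0$. For the first bullet I would take the candidate smoothings on the homeomorphism type of $\R P^7$ to be $\R P^7 \# \Sigma$ with $\Sigma$ ranging over $\Theta_7$, and for the second, given a closed flat $7$-manifold $N = \mathbb{E}^7/\Gamma$, the manifolds $N\#\Sigma$ with $\Sigma\in\Theta_7$. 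Since $\Sigma$ is homeomorphic to $S^7$, each $\R P^7\#\Sigma$ (resp.\ $N\#\Sigma$) is homeomorphic to $\R P^7$ (resp.\ $N$), so all candidates lie in the prescribed homeomorphism class.

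First I would equip the base pieces with polarized structures. The real projective space carries a \emph{free} circle action: the Hopf action of $S^1$ on $S^7\subset\C^4$ contains the antipodal involution $x\mapsto -x=e^{i\pi}x$ in its image, so it descends to a free $S^1$-action on $\R P^7=S^7/\{\pm1\}$, exhibiting $\R P^7$ as a circle bundle over $\C P^3$ and hence as carrying a polarized $\mathcal{T}$-structure (a refinement of the $\mathcal{F}$-structure of Example \ref{Example SR}). A closed flat $7$-manifold is a homotopy Euclidean space form, so it carries a polarized $\mathcal{F}$-structure by Theorem \ref{Theorem P}; moreover, on any contractible chart the Galois covering is trivial and the local torus of translations acts, which supplies the open set with trivial Galois covering and circle orbit required by the hypothesis of Proposition \ref{Proposition Connected}. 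Next I would record that every $\Sigma\in\Theta_7$ admits a polarized $\mathcal{T}$-structure: the standard $S^7$ does by Example \ref{Example SR}; the generator $\Sigma_0$ does because it carries a pseudofree circle action (e.g.\ via its Brieskorn model), which yields a polarized $\mathcal{T}$-structure by Example \ref{Example MY}; and every $k\Sigma_0$ then does by iterating Proposition \ref{Proposition Connected} over copies of $\Sigma_0$.

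With these ingredients the construction of the structures is immediate. Since $\dim = 7 = 2\cdot 3+1$ is odd and each summand carries a polarized $\mathcal{F}$-structure whose cover contains a trivially covered open set, Proposition \ref{Proposition Connected} equips $\R P^7\#\Sigma$ and $N\#\Sigma$ with polarized $\mathcal{F}$-structures. In the projective case all pieces carry $\mathcal{T}$-structures (all Galois coverings trivial), and inspection of the proof of Proposition \ref{Proposition Connected} then shows the glued structure is again a polarized $\mathcal{T}$-structure, exactly as the statement demands.

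The main obstacle is to show that the $28$ resulting manifolds are pairwise \emph{inequivalent}, i.e.\ that the connected-sum action of $\Theta_7$ is free, equivalently that the inertia groups $I(\R P^7)$ and $I(N)$ vanish. In the spin case, which includes $\R P^7$ (spin since $7\equiv 3 \bmod 4$), I expect the clean route to be the Eells--Kuiper $\mu$-invariant: it is additive under connected sum and injective on $\Theta_7\cong bP_8$, so the $28$ manifolds $\R P^7\#\Sigma$ carry the $28$ distinct values $\mu(\R P^7)+\mu(\Sigma)$ and are therefore pairwise non-diffeomorphic. For a general, possibly non-spin, flat $N$ the $\mu$-invariant is unavailable, and I would instead invoke the vanishing of the inertia group of closed flat (hence aspherical) manifolds from smoothing and surgery theory. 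Establishing or citing $I(N)=0$ in this generality, together with matching the enumeration to $|\Theta_7|=28$ through the surgery classification of the smooth structure set (and fixing the orientation convention under which orientation-reversing self-diffeomorphisms are taken into account), is the genuinely nontrivial input; the remainder is an assembly of Example \ref{Example SR}, Example \ref{Example MY}, Theorem \ref{Theorem P}, and Proposition \ref{Proposition Connected}.
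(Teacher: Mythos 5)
Your construction coincides with the paper's: both realize the smoothings as $\R P^7\#\Sigma$ (resp.\ $N\#\Sigma$) with $\Sigma$ ranging over $\Theta_7\cong\Z/28$, use the free circle action on $\R P^7$ descended from the Hopf action, take Cheeger--Gromov's polarized $\mathcal{F}$-structure on the flat manifold, put polarized $\mathcal{T}$-structures on the homotopy $7$-spheres via fixed-point-free circle actions, and glue with Proposition \ref{Proposition Connected}. There are two differences. For the spheres, the paper simply quotes Montgomery--Yang (every homotopy $7$-sphere admits a pseudofree circle action, so Example \ref{Example MY} applies to each $\Sigma$ directly), whereas you treat a generator via its Brieskorn model and then iterate Proposition \ref{Proposition Connected} over $k\Sigma_0$; both work, though note the visible Brieskorn $S^1$-action is locally free rather than pseudofree (its exceptional orbits sweep out a positive-dimensional submanifold), which is all that polarization requires. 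The substantive divergence is the inequivalence argument. The paper passes to universal covers: a diffeomorphism $\R P^7\#\Sigma_i\cong\R P^7\#\Sigma_j$ forces $2\Sigma_i=2\Sigma_j$ in $\Theta_7$, and since doubling on $\Z/28$ has kernel $\{0,14\}$ this by itself separates classes only modulo that kernel --- a point the paper elides. Your Eells--Kuiper route ($\mu$ is defined on the spin rational homology sphere $\R P^7$, is additive under connected sum, and is injective on $bP_8=\Theta_7$) distinguishes all $28$ oriented classes at once, so on this point your argument is sharper; its price is the orientation caveat you correctly flag: $\R P^7$ admits orientation-reversing self-diffeomorphisms, so $\R P^7\#\Sigma\cong\R P^7\#\bar{\Sigma}$ as unoriented manifolds, and in either argument the count of $28$ must be read as a count of oriented smoothings. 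On the second bullet both treatments stop short of a full count: the paper merely asserts that $X\#\Sigma^n$ need not be diffeomorphic to $X$ (citing Wall), while you defer to vanishing of inertia groups of flat manifolds, which you rightly identify as the nontrivial input; be aware that for a non-orientable flat $N$ the identification $N\#\Sigma\cong N\#\bar{\Sigma}$ is unavoidable, so no inertia-group statement can produce $28$ pairwise non-diffeomorphic manifolds in that case.
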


To prove Corollary \ref{Corollary HRPN}, we use well-known argument to construct inequivalent smoothings on high-dimensional manifolds.

\begin{proof} There is a free circle action on the real projective 7-space, which yields a polarized $\mathcal{T}$-structure. Consider the connected sum $\R P^7\#\Sigma^7$, which is homeomorphic to the projective 7-space. Montgomery-Yang \cite{[MoYa]} have shown that every homotopy 7-sphere admits a pseudo free circle action. Example \ref{Example MY} and Proposition \ref{Proposition Connected} imply that any such connected sum admits a polarized $\mathcal{T}$-structure. We can choose homotopy 7-spheres $\Sigma^7_i$ and $\Sigma^7_j$ such that looking at the universal covers $S^7\#\Sigma_i\#\Sigma_i$ and $S^7\#\Sigma_j\#\Sigma_j$ of the two homeomorphic manifolds $\R P^7\# \Sigma^7_i$ and $\R P^7\# \Sigma^7_j$, we conclude that they are diffeomorphic if and only if $i = j$. The corollary now follows since there are 28 smoothings on $S^7$.

The proof of the second claim is similar and we present it without assumptions on the dimension. Let $\Sigma^n$ be a homotopy n-sphere and set $X = \mathbb{E}^n/\Gamma$. Cheeger-Gromov have shown that $X$ admits a polarized $\mathcal{F}$-structure \cite{[CG]}. The connected sum\begin{equation}\label{ExoticT}X\# \Sigma^n\end{equation} is homeomorphic to the Euclidean space form $\mathbb{E}^n/\Gamma$ but it need not be diffeomorphic to it (cf. \cite[Chapter 15]{[Wall]}). 
\end{proof}

The next proposition generalizes \cite[Examples 0.3 and 1.3]{[CG]}. It will be used in Section \ref{Section Tori} and in the proof of Theorem \ref{Theorem P}.

\begin{proposition}{\label{Proposition OrbitSpace}} Let $\widetilde{M}$ and $M$ be closed smooth n-manifolds and let \begin{equation}\widetilde{M}\overset{\pi}\longrightarrow M\end{equation} be a finite covering. Suppose $\widetilde{M}$ admits a free torus action\begin{equation}\label{T-action}\widetilde{\phi}: T^k\rightarrow \Diff(\widetilde{M}).\end{equation}There is a polarized $\mathcal{F}$-structure on $M$.
\end{proposition}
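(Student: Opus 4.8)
The plan is to read off the data of Definition~\ref{Definition T-str} directly from $\pi$ and $\widetilde\phi$, letting the free torus action supply the chart actions of Item~(3) and the deck transformations of $\pi$ supply the finite Galois coverings of Item~(2) together with the representations of Item~(4). First I would reduce to the case in which $\pi$ is a Galois covering. Replacing $\pi$ by its Galois closure $\widehat{M}\to M$, which factors as $\widehat{M}\to\widetilde{M}\to M$ and has finite deck group $\Gamma$, it suffices to lift $\widetilde\phi$ to $\widehat{M}$. Since $T^k$ is connected the action lifts to an $\R^k$-action on $\widehat{M}$; the lattice $\Z^k=\ker(\R^k\to T^k)$ maps into the (finite) deck group of $\widehat{M}\to\widetilde{M}$, so a finite-index sublattice acts trivially and the lifted action descends to a $T'^{k}$-action for a finite cover $T'^{k}\to T^k$. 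Because $\widetilde\phi$ is free and deck transformations act freely, this $T'^{k}$-action is again free. Thus I may assume $\pi$ is Galois with group $\Gamma$ and that $\widehat{M}$ carries a free $T^k$-action.

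If, as in the motivating case of a flat manifold finitely covered by a torus (where $\Gamma$ acts by affine maps and hence conjugates translations to translations), every $\gamma\in\Gamma$ normalizes the torus $\widetilde\phi(T^k)\subseteq\Diff(\widehat{M})$, the construction is immediate: take the single open set $U_1=M$ with Galois covering $\widehat{M}\to M$, chart action $\widetilde\phi$, and $\Phi_1(\gamma)\in\Aut(T^k)$ the automorphism induced by conjugation. The compatibility of Item~(4) is then exactly the normalizing identity $\gamma\,\widetilde\phi(t)\,\gamma^{-1}=\widetilde\phi(\Phi_1(\gamma)t)$, Item~(5) is vacuous, and freeness makes the structure polarized (Item~(7)). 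In general I would instead cover $\widehat{M}$ by $T^k$-invariant open sets $\widetilde{U}_i$ whose $\Gamma$-translates are pairwise equal or disjoint, so that the images $U_i:=\pi(\widetilde{U}_i)$ cover $M$ and $\pi\colon\widetilde{U}_i\to U_i$ is a finite Galois covering with group $\Gamma_i:=\{\gamma\in\Gamma:\gamma\widetilde{U}_i=\widetilde{U}_i\}$; the restriction of $\widetilde\phi$ furnishes the chart action.

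The main obstacle is Item~(4): for this cover to define an $\mathcal{F}$-structure I must know that each local deck group $\Gamma_i$ normalizes the torus action on $\widetilde{U}_i$, which conjugation of a free torus action does not guarantee a priori. The homotopy-theoretic input I would exploit is that the image of $\pi_1(T^k)$ under any orbit map is central in $\pi_1(\widehat{M})$ and therefore preserved by $\gamma_\ast$, which produces a canonical candidate automorphism $\Phi_i(\gamma)$; the remaining work is to upgrade this to the genuine normalizing identity of diffeomorphisms. I would do this by choosing the $\widetilde{U}_i$ to be thin tubes adapted to the orbit foliation $\widehat{M}\to\widehat{M}/T^k$, so that the elements of $\Gamma_i$ permute the orbits inside $\widetilde{U}_i$ and hence act through $\Aut(T^k)$; charts with trivial $\Gamma_i$ become honest $\mathcal{T}$-structure charts carrying the transported free torus action. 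With Item~(4) in hand, Item~(5) is automatic because all chart actions are restrictions of the single $T^k$-action on $\widehat{M}$, so the lifted actions commute on overlaps, and polarization follows at once from freeness.
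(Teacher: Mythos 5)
Your opening reduction is sound and is in fact more careful than anything the paper does: passing to the Galois closure $\widehat{M}\to M$, lifting $\widetilde\phi$ to an $\R^k$-action (which exists since $\R^k$ is simply connected and the lift at time $0$ is the identity), noting that $\Z^k$ maps to the finite deck group of $\widehat{M}\to\widetilde{M}$, and descending to a free action of a finite covering torus is a correct argument, provided you take the sublattice $\Lambda$ to be the full kernel of $\Z^k\to \mathrm{Deck}(\widehat{M}/\widetilde{M})$ so that freeness survives. Your single-chart construction in the normalizing case is also correct; that case is exactly \cite[Examples 0.3 and 1.3]{[CG]} (flat manifolds, where deck transformations are affine and hence conjugate translations to translations), which is precisely what Proposition \ref{Proposition OrbitSpace} claims to generalize.

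The gap is in the general case, at the very step you flag as the main obstacle, and your proposed fix does not close it. If $\gamma\in\Gamma_i$ stabilizes a $T^k$-invariant tube $\widetilde{U}_i$, then $\gamma$ conjugates the restricted action to \emph{another} free $T^k$-action on $\widetilde{U}_i$, whose orbits are the $\gamma$-images of the original orbits; nothing forces the two orbit foliations to coincide, no matter how thin the tube is. Concretely, in a trivialization $\widetilde{U}_i\cong T^k\times D^{n-k}$ the image $\gamma(T^k\times\{0\})$ can be the graph of a nonconstant map $T^k\to D^{n-k}$, hence not an orbit; shrinking $D^{n-k}$ does not preclude this, because the failure is a shear along the orbit directions, which are not small. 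Even granting that $\gamma$ permutes orbits, Item (4) of Definition \ref{Definition T-str} demands more: the induced self-map of $T^k$ must be a single automorphism $\Phi_i(\gamma)\in\Aut(T^k)$, independent of the orbit, whereas an orbit-permuting diffeomorphism is a priori only a gauge transformation twisting by basepoint-dependent translations. Your homotopy-theoretic remark also overreaches: centrality of $\mathrm{im}\bigl(\pi_1(T^k)\to\pi_1(\widehat{M})\bigr)$ does not make that subgroup invariant under $\gamma_\ast$, since central subgroups need not be characteristic. So the crux of the proposition --- manufacturing the normalizing identity out of no compatibility hypothesis between $\widetilde\phi$ and the deck group --- remains unproven in your write-up. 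For comparison, the paper proceeds differently (evenly-covered charts on $M$, a geodesic transport to choose sheets coherently, and a replacement of $T^k$ by a free circle subgroup so that $\Aut(S^1)\cong\Z/2$), but it asserts rather than proves that Item (4) is then ``immediately satisfied''; that assertion presupposes the same orbit-compatibility your argument is missing, so you have not found a route around the step, and neither construction as written supplies it.
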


\begin{proof} The free torus action $\widetilde{\phi}$ yields a pure polarized $\mathcal{T}$-structure on $\widetilde{M}$ with elements of the covering $\{V_1, \ldots, V_N\}$ given by embedded submanifolds that are diffeomorphic to the torus orbits. We use this structure to construct a polarized $\mathcal{F}$-structure on $M$. Regarding Item (1) of Definition \ref{Definition T-str}, choose a covering $\{U_1, \ldots, U_N\}$ for $M$ of open sets whose intersection are either empty or path-connected and such that elements of $\{\pi^{-1}(U_i)\}$ are diffeomorphic to $U_i$ for every $i\in \{1, \ldots, N\}$. Pick a Riemannian metric $g$ on $M$ and let $\tilde{g}$ be the metric on $\widetilde{M}$ that makes $\pi$ a Riemannian submersion $(\widetilde{M}, \tilde{g})\rightarrow (M, g)$. Fix points $x\in M$ and $y\in \pi^{-1}(x)\subset \widetilde{M}$, and take an $x_i\in U_i$ for every $i$. Let $\alpha_i:[0, 1]\rightarrow M$ be a geodesic with initial and end points $\alpha_i(0) = x$ and $\alpha_i(1) = x_i$. There is a geodesic $\widetilde{\alpha}_i: [0, 1]\rightarrow \widetilde{M}$ with $\widetilde{\alpha}_i(0) = y$ and $\widetilde{\alpha}_i(1) = \widetilde{x}_i\in \pi^{-1}(x_i)$ and $\widetilde{x}_i\in \widetilde{U}_m$ for some open subset $\widetilde{U}_i\subset \widetilde{M}$. We now use the $T^k$-torus action of (\ref{T-action}) to define an action $\phi_i: T^{k_i}\rightarrow \Diff(\widetilde{U}_i)$ with trivial isotropy group as in Items (3) and (7) of Definition \ref{Definition T-str}. Notice that the hypothesis on $\widetilde{\phi}$ can be relaxed as follows. For any action $T^k\rightarrow \Diff(\widetilde{M})$ whose isotropy group has codimension greater than one, we immediately obtain a free circle action for a given $S^1\subset T^k$.  We can define a circle action as the aforementioned restriction of the torus action $\widetilde{\phi}: T^k\rightarrow \Diff(V_j)$ to $\widetilde{U}_i\subset \widetilde{M}$ for every $1\leq i \leq N$. If the intersections $U_i\cap U_j$ and $\widetilde{U}_i\cap \widetilde{U}_j$ are both nonempty, we have a canonical covering. If the intersection $U_i\cap U_j$ is empty, yet $\widetilde{U}_i\cap \widetilde{U}_j\neq \emptyset$, then there is a Deck transformation $g: \widetilde{M}\rightarrow \widetilde{M}$ such that $g\widetilde{U}_i\cap \widetilde{U}_j\neq \emptyset$. The corresponding group of Deck transformations on the open set is finite by assumption and $\Aut(S^1) \cong \Z/2$.. The commutativity condition in Item (4), and invariance of Item (5) of Definition \ref{Definition T-str} are immediately satisfied. Hence, we have constructed a polarized $\mathcal{F}$-structure on $M$.
\end{proof}

\subsection{Collapsing, minimal volumes and entropy, and Yamabe invariant}{\label{Section CG}}A smooth manifold $M$ collapses with bounded sectional/Ricci/scalar curvature if and only if there exists a sequence of Riemannian metrics $\{g_j\}$ for which the sectional/Ricci/scalar curvature is uniformly bounded, but their volumes $\{\Vol(M, g_j)\}$ approach zero as $j\rightarrow \infty$. If the curvature of $\{g_j\}$ only has a lower bound, we say that $M$ collapses with curvature bounded from below.\\

Define\begin{equation}\MinVol(M):= \underset{g}{\inf}\{\Vol(M, g) : |K_g|\leq 1\}\end{equation}and\begin{equation}\Vol_K(M):= \underset{g}{\inf}\{\Vol(M, g): K_g \geq - 1\},\end{equation} where the sectional curvature of the Riemannian metric $g$ is denoted by $K_g$, and we assumed the normalization $\Vol(M, g) = 1$. The invariant $\MinVol(M)$ is known as the minimal volume and was introduced by Gromov \cite{[G]}.\\

For a real number $D > 0$, the more restrictive invariant $\DMinVol$(M) of a closed smooth Riemannian manifold $(M, g)$ can be defined as follows. Much like $\MinVol(M)$, it is required that the infimum of $\Vol(M, g)$ is to be taken over all metrics $g$ and additionally requiring that the diameter remains bounded from above by $D$. The minimal entropy $\He(M)$ is the infimum of the topological entropy of the geodesic flow of a smooth metric $g$ on $M$ such that $\Vol(M, g) = 1$. The inequality \begin{equation}[\He(M)]^n \leq (n - 1)^n \MinVol(M)\end{equation} is known to hold \cite[page 417]{[PP1]}.\\

Let us now recall the definition of the Yamabe invariant  \cite{[BeY], [ScY]}. Let\begin{equation}\gamma:= [g] = \{ug : M\overset{u}{\rightarrow} \R^+\}\end{equation} be a conformal class of Riemannian metrics on $(M, g)$. The Yamabe constant of $(M, \gamma)$ is\begin{equation} \mathcal{Y}(M, \gamma):= \underset{g\in \gamma}{\inf} \frac{\int_M Scal_g d\Vol_g}{(\Vol(M, g))^{\frac{n - 2}{n}}}.\end{equation} The Yamabe invariant of $M$ is defined as\begin{equation} \mathcal{Y}(M):= \underset{\gamma}{\sup}  \mathcal{Y}(M, \gamma).\end{equation} 

A profound utility of $\mathcal{F}$-structures is sampled in the following results.

\begin{theorem}\label{Theorem CG} Cheeger-Gromov \cite{[CG]}. The minimal volume of a manifold $M$ that admits a polarized $\mathcal{F}$-structure vanishes, and $M$ collapses with bounded sectional curvature.
\end{theorem}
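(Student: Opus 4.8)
The two conclusions are in fact equivalent under parabolic rescaling, so the plan is to produce a single collapsing family and deduce both. If $\{g_j\}$ is a sequence with $|K_{g_j}| \le C$ and $\Vol(M, g_j) \to 0$, then replacing $g_j$ by $C\, g_j$ scales sectional curvature by $C^{-1}$ and volume by $C^{n/2}$, so $|K_{C g_j}| \le 1$ while $\Vol(M, C g_j) \to 0$; hence $\MinVol(M) = 0$. Conversely $\MinVol(M) = 0$ supplies such a sequence. It therefore suffices to build, for each small $t > 0$, a Riemannian metric $g_t$ on $M$ whose sectional curvature is bounded independently of $t$ and with $\Vol(M, g_t) \to 0$ as $t \to 0$.

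The local model is the collapse of a flat torus orbit, generalizing the Berger sphere. On a single chart, pass to the cover $\widetilde{U}_i$ carrying the torus action $\phi_i : T^{k_i} \to \Diff(\widetilde{U}_i)$ of Item (3). I would choose a $T^{k_i}$-invariant metric and split it into directions tangent and orthogonal to the orbits; because the action is \emph{polarized}, every orbit is a positive-dimensional flat torus, and I would take the canonical variation $g_t$ that rescales the orbit (vertical) directions by $t^2$ while fixing the transverse geometry. Viewing $\widetilde{U}_i$ locally as a Riemannian submersion onto the orbit space with totally geodesic flat fibers, the O'Neill formulas show that the vertical, mixed, and horizontal sectional curvatures of $g_t$ stay uniformly bounded as $t \to 0$: the intrinsic fiber curvature vanishes, and the integrability (A-tensor) contributions carry factors of $t^2$. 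Meanwhile the orbit volume shrinks like $t^{k_i}$, so the volume of each chart tends to zero. The equivariance condition (Item 4) lets these invariant metrics descend through the Galois coverings $\pi_i$ to $U_i \subset M$.

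To globalize, I would patch the local variations with a partition of unity subordinate to $\{U_1, \ldots, U_N\}$. The compatibility condition (Item 5) --- that on every nonempty overlap the lifted torus actions commute --- is precisely what guarantees that the several local shrinking directions can be carried out simultaneously and consistently, so that a common parameter $t$ defines a global metric $g_t$. One fixes $t$ for all charts at once, checks that the patched metric remains invariant under each local action on the overlaps, and verifies that the O'Neill curvature estimates survive the interpolation. Since polarization forces a positive-dimensional orbit through every point, every point lies in some chart whose orbit directions are being collapsed, and hence $\Vol(M, g_t) \to 0$.

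The main obstacle is exactly this globalization. Keeping the sectional curvature \emph{uniformly} bounded while patching charts with orbits of different dimensions $k_i$, possibly nontrivial deck groups $\Gamma_i$, and differently oriented collapsing directions is delicate: the interpolation through the partition of unity can a priori create large curvature where the invariant metrics disagree, so one must exploit the commuting-actions condition to arrange that the transition regions themselves collapse with controlled second fundamental form. This is the technical heart of the Cheeger--Gromov construction, and carrying it out carefully --- rather than the curvature bookkeeping on a single chart --- is where the real work lies.
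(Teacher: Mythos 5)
First, a point of comparison: the paper contains no proof of this statement at all --- Theorem \ref{Theorem CG} is imported from Cheeger--Gromov \cite{[CG]} as a black box, and the paper's own contribution consists of constructing polarized $\mathcal{F}$-structures to feed into it. So your proposal is an attempted reconstruction of the Cheeger--Gromov argument itself, and must be judged on its own terms.

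On those terms there is a genuine gap, and it sits exactly where you placed it, but it is worse than your closing paragraph suggests. Your local model (invariant metric on a chart, canonical variation scaling orbit directions by $t^2$, O'Neill estimates) is correct, as is the equivalence of the two conclusions under rescaling. But the globalization you propose --- a partition of unity subordinate to the fixed charts $\{U_i\}$ with a \emph{single} collapse parameter $t$ used in all charts simultaneously --- provably fails, so it cannot be left as a construction whose details merely need to be ``verified.'' Already for the non-pure $\mathcal{T}$-structure on $S^3$ of Example \ref{Example SR} (two solid tori, collapsing different circle directions), the transition region is a $T^2\times[0,L]$ on which the short direction of the torus must rotate from one circle factor to the other. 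Writing the interpolating metric as $dr^2 + f(r)^2dx^2 + h(r)^2dy^2$ with $f(0)=t$, $f(L)=1$, $h(0)=1$, $h(L)=t$, the curvature terms $-f''/f$ and $-h''/h$ being bounded forces at most exponential growth of $f$, hence $L \geq c\log(1/t)$; with a fixed partition of unity (fixed $L$) the curvature blows up on the order of $(\log(1/t)/L)^2$ as $t\to 0$. The actual Cheeger--Gromov proof is organized around precisely this phenomenon: one must either stretch the transition necks at rate $\log(1/t)$ or collapse different charts at nested rates $t^{a_i}$, construct a metric invariant under all the local actions at once (this is where the commutativity on overlaps and the representations $\Phi_i$ enter), and then re-verify that the volume still tends to zero despite the lengthening necks. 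That multi-scale bookkeeping \emph{is} the theorem; deferring it as ``the technical heart \ldots\ where the real work lies'' leaves the proof without its content.
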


\begin{theorem}\label{Theorem PP1} Paternain-Petean \cite{[PP1]}. If a manifold $M$ admits an $\mathcal{F}$-structure then $\He(M) = 0 = \Vol_K(M)$ and $M$ collapses with sectional curvature bounded from below.
\end{theorem}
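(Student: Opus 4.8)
The plan is to manufacture an explicit collapsing sequence of metrics directly from the $\mathcal{F}$-structure and to extract from it both the vanishing of $\Vol_K$ and of the minimal entropy. First I would produce a metric adapted to the structure: on each chart $\widetilde{U}_i$ average an auxiliary Riemannian metric over the torus action $\phi_i$ to obtain a $T^{k_i}$-invariant metric, use the equivariance of Item (4) and the commuting lifts of Item (5) of Definition \ref{Definition T-str} to check that these descend to $U_i$ and are mutually compatible over the overlaps, and glue by a partition of unity subordinate to $\{U_1,\dots,U_N\}$. The outcome is a global metric $g$ that is invariant, on each piece, under the local orbit directions of the structure.

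Next I would collapse along the orbits. I define a one-parameter family $g_\delta$ by rescaling, $\phi_i$-invariantly and compatibly across overlaps, the directions tangent to the torus orbits by a factor $\delta^2$ while leaving the orthogonal directions unchanged. Because every point of $M$ lies in some $U_i$ on which the orbits have positive dimension $k_i\ge 1$, the total volume satisfies $\Vol(M,g_\delta)\to 0$ as $\delta\to 0$, and the diameter stays bounded.

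The heart of the matter is the lower curvature bound. I would estimate the sectional curvature of $g_\delta$ through O'Neill's submersion formulas for the local Riemannian submersions onto the orbit spaces: the fibres are flat tori, so the vertical curvatures vanish and the horizontal ones converge to curvatures of the base and remain bounded, while the mixed planes are governed by the O'Neill integrability and shape tensors. This is exactly where the hypothesis is weaker than Cheeger--Gromov's polarized statement (Theorem \ref{Theorem CG}): near a fixed point of a non-polarized action the mixed terms can drive $K_{g_\delta}\to +\infty$, but a local-model computation (a rotation action collapsing a disc factor) shows they stay bounded \emph{below} uniformly in $\delta$. After a single rescaling $g_\delta\mapsto C g_\delta$ this yields $K\ge -1$ with the volume still tending to $0$, so $\Vol_K(M)=0$ and $M$ collapses with sectional curvature bounded from below.

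Finally, for the minimal entropy I would normalize, setting $\hat g_\delta=\lambda_\delta^2 g_\delta$ with $\lambda_\delta$ chosen so that $\Vol(M,\hat g_\delta)=1$; since $\Vol(M,g_\delta)\to 0$ we have $\lambda_\delta\to\infty$, whence $K_{\hat g_\delta}=\lambda_\delta^{-2}K_{g_\delta}\to 0$ from below while the topological entropy scales as $h_{\mathrm{top}}(\hat g_\delta)=\lambda_\delta^{-1}h_{\mathrm{top}}(g_\delta)$. I would then bound the topological entropy of the collapsing metrics, exploiting that the collapse proceeds along flat torus (abelian, hence zero-entropy) orbit directions, to conclude $h_{\mathrm{top}}(\hat g_\delta)\to 0$ and therefore $\He(M)=0$. \textbf{The main obstacle} is twofold and located in the last two steps: first, the uniform lower bound requires verifying that degenerating an orbit at a fixed point—precisely the mechanism that sends the upper curvature bound to $+\infty$—never sends the sectional curvature to $-\infty$, which amounts to controlling the O'Neill correction terms in this singular limit; second, one must bound the full \emph{topological} entropy (not merely the volume entropy, which is tamed by the lower Ricci bound via Manning's inequality) of the rescaled metrics, and this is the genuinely delicate estimate at the technical core of the argument.
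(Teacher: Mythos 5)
You should first be aware that the paper does not prove this statement at all: Theorem \ref{Theorem PP1} is imported as a black box from Paternain--Petean \cite{[PP1]}, and every use of it in the paper is by citation. So your proposal has to be measured against the proof in \cite{[PP1]} itself, whose skeleton (adapted invariant metrics, collapse along orbits, one-sided curvature control, entropy of the collapsing sequence) you do roughly reproduce --- but the two steps you flag as ``the main obstacle'' are not side issues; they \emph{are} the theorem, and both are left unresolved. The first genuine gap is the construction of $g_\delta$. An $\mathcal{F}$-structure in the sense of Definition \ref{Definition T-str} need not be polarized: the actions $\phi_i$ may have fixed points, so your claim that ``every point of $M$ lies in some $U_i$ on which the orbits have positive dimension $k_i\ge 1$'' confuses the dimension of the acting torus with the dimension of its orbits, which can jump down to zero. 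At such points the orbit distribution is singular, and ``rescaling the directions tangent to the orbits by $\delta^2$'' does not produce a smooth metric --- already in the model of the rotation action on $D^2$ it yields the cone $dr^2+\delta^2r^2\,d\theta^2$, singular at the origin. Consequently there is no Riemannian submersion and no smooth metric there whose O'Neill tensors one could estimate. The correct construction (this is what \cite{[PP1]} actually carries out) replaces the naive rescaling by warped products whose warping functions are concave near the fixed-point sets, giving a spindle-type collapse: the volume still tends to zero because the fixed sets have measure zero, and the curvature blows up to $+\infty$ while staying bounded below. You gesture at this local model, but your global definition of $g_\delta$ has to be abandoned, not just estimated, for the argument to begin. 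This is exactly the point where the hypothesis is weaker than in Theorem \ref{Theorem CG}, and it is why the conclusion is only $\Vol_K(M)=0$ rather than $\MinVol(M)=0$.

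The second genuine gap is the entropy bound. Your scaling computation is correct, but to conclude $\He(M)=0$ you need an upper bound on the topological entropy $h_{\mathrm{top}}(g_\delta)$ that grows slower than $\Vol(M,g_\delta)^{-1/n}$, and no hypothesis in sight provides one: a lower curvature bound controls the volume entropy (via Bishop--Gromov), but Manning's inequality bounds $h_{\mathrm{top}}$ from \emph{below} by the volume entropy, so it is useless for upper bounds, and the entropy of the geodesic flow is carried precisely by geodesics transverse to the orbits, which the phrase ``abelian, hence zero-entropy orbit directions'' does not control. In \cite{[PP1]} this step is done by expressing $h_{\mathrm{top}}$ through Ma\~{n}\'{e}'s formula as the exponential growth rate of the average number of geodesic arcs joining pairs of points (a formula resting on Yomdin's theorem, hence requiring the $C^\infty$ smoothness that your $g_\delta$ lacks --- so gap one feeds into gap two), and then counting geodesic arcs of the adapted metrics; this counting argument occupies the technical core of their paper. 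As written, your proposal is a correct identification of the strategy and of the difficulties, but not a proof: both essential estimates are acknowledged holes.
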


Collapse with scalar curvature bounded from below and collapse with bounded scalar curvature are equivalent for manifolds of dimension at least three \cite[Proposition 7.1]{[PP1]}. The latter is equivalent to the Yamabe invariant being nonnegative. The Yamabe invariant of a smooth closed manifold $M$ is positive if and only if $M$ admits a Riemannian metric of positive scalar curvature \cite{[BeY]}. In particular, we have the following well-known lemma \cite{[BeY], [Lebrun], [PP1]}.

\begin{lemma}\label{Lemma Blackbox} Suppose $M$ is a closed smooth manifold that does not admit a Riemannian metric of positive scalar curvature. If $M$ collapses with bounded scalar curvature, then\begin{equation}\mathcal{Y}(M) = 0.\end{equation}Moreover, any scalar-flat Riemannian metric on $M$ is Ricci-flat.
\end{lemma}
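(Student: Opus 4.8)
The plan is to prove the two assertions in turn, relying only on the facts assembled in Section \ref{Section CG}. For the vanishing statement I would argue by squeezing $\mathcal{Y}(M)$ between two inequalities. Since $M$ carries no metric of positive scalar curvature, the characterization of \cite{[BeY]} recalled above --- that $\mathcal{Y}(M)>0$ holds if and only if $M$ admits such a metric --- gives $\mathcal{Y}(M)\leq 0$. Conversely, the hypothesis that $M$ collapses with bounded scalar curvature is, by the equivalence recorded above, the same as $\mathcal{Y}(M)\geq 0$. Together these force $\mathcal{Y}(M)=0$.

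For the second assertion, let $g$ be a metric with $Scal_g\equiv 0$ and suppose, toward a contradiction, that $\mathrm{Ric}_g\not\equiv 0$. I would test the Einstein--Hilbert functional against the direction $h=-\mathrm{Ric}_g$: setting $g_t:=g-t\,\mathrm{Ric}_g$, which is a Riemannian metric for $|t|$ small, the first variation formula for the total scalar curvature gives
\begin{equation*}
\left.\frac{d}{dt}\right|_{t=0}\int_M Scal_{g_t}\,d\Vol_{g_t}=\int_M\left\langle \tfrac12 Scal_g\,g-\mathrm{Ric}_g,\,-\mathrm{Ric}_g\right\rangle d\Vol_g=\int_M|\mathrm{Ric}_g|^2\,d\Vol_g>0,
\end{equation*}
where the middle step uses $Scal_g=0$ and the final inequality uses $\mathrm{Ric}_g\not\equiv 0$. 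Thus perturbing in the direction $-\mathrm{Ric}_g$ strictly increases the total scalar curvature.

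The remaining step is to promote this into positivity of the Yamabe constant of the nearby conformal classes. The sign of $\mathcal{Y}(M,[g_t])$ agrees with the sign of the bottom eigenvalue $\lambda_1(L_{g_t})$ of the conformal Laplacian $L_{g_t}=-\tfrac{4(n-1)}{n-2}\Delta_{g_t}+Scal_{g_t}$. At $t=0$ one has $L_{g_0}=-\tfrac{4(n-1)}{n-2}\Delta_{g_0}$, whose bottom eigenvalue is $\lambda_1(L_{g_0})=0$, simple, with eigenspace the constants. Because this eigenvalue is simple, $\lambda_1(L_{g_t})$ is differentiable at $t=0$, and first-order perturbation theory (Hellmann--Feynman), applied with the constant eigenfunction so that the Dirichlet term drops out, yields
\begin{equation*}
\left.\frac{d}{dt}\right|_{t=0}\lambda_1(L_{g_t})=\frac{1}{\Vol(M,g)}\left.\frac{d}{dt}\right|_{t=0}\int_M Scal_{g_t}\,d\Vol_{g_t}=\frac{\int_M|\mathrm{Ric}_g|^2\,d\Vol_g}{\Vol(M,g)}>0.
\end{equation*}
Hence $\lambda_1(L_{g_t})>0$ for small $t>0$, so $[g_t]$ contains a metric of positive scalar curvature; equivalently $\mathcal{Y}(M,[g_t])>0>\mathcal{Y}(M)=0$. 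Either reading contradicts the hypotheses, so $\mathrm{Ric}_g\equiv 0$ and $g$ is Ricci-flat.

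I expect the last computation to be the main obstacle: one must justify differentiating the bottom eigenvalue of the conformal Laplacian along $g_t$ and identify its derivative with that of $\int_M Scal\,d\Vol$. This is precisely where simplicity of $\lambda_1(L_{g_0})$ --- guaranteed by connectedness, since the kernel of $-\Delta_{g_0}$ is spanned by the constants --- is essential, as it licenses first-order perturbation theory. The remaining ingredients, namely the first variation of the total scalar curvature and the sign correspondence between $\mathcal{Y}(M,[\cdot])$ and $\lambda_1(L)$, are standard and require no new input.
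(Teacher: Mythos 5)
Your proof is correct, and it is more self-contained than the paper's. For the vanishing statement you argue exactly as the paper does: the text preceding the lemma records that collapse with bounded scalar curvature is equivalent to $\mathcal{Y}(M)\geq 0$, and that $\mathcal{Y}(M)>0$ if and only if $M$ admits a metric of positive scalar curvature, so the first claim is precisely the squeeze you describe. For the Ricci-flatness statement, however, the paper gives no argument at all: it declares the lemma ``well-known'' and points to \cite{[BeY]}, \cite{[Lebrun]}, \cite{[PP1]}. You instead reconstruct the standard Bourguignon--Kazdan--Warner perturbation argument hiding behind those citations: deform a scalar-flat, non-Ricci-flat metric in the direction $-\mathrm{Ric}_g$, use the first variation of the Einstein--Hilbert functional to strictly increase total scalar curvature, and convert this into positivity of $\lambda_1$ of the conformal Laplacian, hence into a positive-scalar-curvature metric in a nearby conformal class, contradicting either hypothesis. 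Your eigenvalue-perturbation step is legitimate, and in fact cleaner than you suggest: because $\lambda_1(L_{g_0})=0$, the $t$-dependence of the inner product (the operators $L_{g_t}$ are self-adjoint with respect to the varying measures $d\Vol_{g_t}$) contributes nothing at first order; because $Scal_{g_0}=0$, the variation of the volume form does not disturb the identification of $\lambda_1'(0)$ with $\Vol(M,g)^{-1}\frac{d}{dt}\big|_{t=0}\int_M Scal_{g_t}\,d\Vol_{g_t}$; and $\Delta_{g_t}$ annihilates constants for every $t$, so the Dirichlet term drops as you say. Two cosmetic points: the chain $\mathcal{Y}(M,[g_t])>0>\mathcal{Y}(M)=0$ is internally inconsistent as written (you mean that $\mathcal{Y}(M)\geq\mathcal{Y}(M,[g_t])>0$ contradicts $\mathcal{Y}(M)=0$); and the sign correspondence between $\mathcal{Y}(M,[\cdot])$ and $\lambda_1(L)$ requires $n\geq 3$, which is harmless here since the paper only invokes the lemma in dimension at least five.
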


\subsection{Non-realization of Yamabe invariant zero in our examples}{\label{Section NY}}  A compact orientable Riemannian n-manifold $M$ is $\epsilon^{-1}$-hyperspherical if there is a map onto the unit n-sphere that multiplies all distances by a factor less than or equal to $\epsilon > 0$. If for every $\epsilon > 0$, there exists a finite covering of $X$ that is $\epsilon^{-1}$-hyperspherical and spin, then $X$ is enlargeable \cite[Section 0]{[GL]}. Compact Euclidean space forms are examples of enlargeable manifolds, and enlargeability is a homotopy invariant. The non-realization of the Yamabe invariant for our examples in Theorem \ref{Theorem P} and Corollary \ref{Corollary P} is a corollary of the following result (cf. Lemma \ref{Lemma Blackbox}).

\begin{theorem}{\label{Theorem BGL}} Bieberbach,  Gromov-Lawson \cite{[GL]} (cf. Schoen-Yau \cite{[SCYA]}). There are no Riemannian metrics of positive scalar curvature on a homotopy Euclidean space form $M$. Any Riemannian metric of non-negative scalar curvature on a homotopy Euclidean space form is flat. In such a case, $M$ is diffeomorphic to $\mathbb{E}^n/\Gamma$.
\end{theorem}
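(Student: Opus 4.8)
The plan is to split the statement into three nested claims and prove them in order: (i) $M$ carries no metric of positive scalar curvature; (ii) any metric of non-negative scalar curvature on $M$ is scalar-flat, hence Ricci-flat, and in fact flat; and (iii) in that case $M$ is diffeomorphic to $\mathbb{E}^n/\Gamma$. The engine throughout is enlargeability combined with the Lichnerowicz–Bochner formula for the Dirac operator, and the argument is closed off by the structure theory of flat manifolds.

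First I would record that $M$ is enlargeable. A compact Euclidean space form $\mathbb{E}^n/\Gamma$ is enlargeable, as recalled just above: the finite cover associated to its translation lattice is a flat torus, which is spin and, after rescaling, admits a distance-decreasing degree-one map onto the round $S^n$. Since $M$ is homotopy equivalent to $\mathbb{E}^n/\Gamma$ and enlargeability is a homotopy invariant, $M$ is enlargeable. By the Gromov-Lawson theorem \cite{[GL]}, an enlargeable manifold admits no metric of positive scalar curvature, which is claim (i).

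Next I would treat the rigidity. Let $g$ be a metric with $Scal_g \geq 0$, and pass to a finite spin cover $\widehat{M}\to M$ furnished by the definition of enlargeability; the pulled-back metric $\widehat{g}$ is again enlargeable with $Scal_{\widehat g}\geq 0$, and since $\widehat M\to M$ is a local isometry, any conclusion of flatness or Ricci-flatness on $\widehat M$ descends to $M$. On $\widehat M$ the enlargeability/index argument of Gromov-Lawson forces a nonzero harmonic spinor $\psi$, and the Lichnerowicz formula $D^2 = \nabla^*\nabla + \frac{1}{4}Scal_{\widehat g}$ integrates to
\begin{equation}
0 = \int_{\widehat M} |\nabla \psi|^2 + \frac{1}{4}\int_{\widehat M} Scal_{\widehat g}\, |\psi|^2.
\end{equation}
As both terms are non-negative they vanish separately, so $\psi$ is parallel and $Scal_{\widehat g}\equiv 0$ (since $|\psi|$ is then a positive constant). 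A parallel spinor forces the metric to be Ricci-flat, so $\widehat M$, and therefore $M$, is scalar-flat and Ricci-flat.

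Finally I would upgrade Ricci-flat to flat and conclude. The group $\Gamma\cong \pi_1(M)$ is a Bieberbach group, so by Bieberbach's first theorem it contains a finite-index normal subgroup isomorphic to $\Z^n$; the associated finite cover $M'$ is a Ricci-flat homotopy $n$-torus with $b_1(M')=n$. By the Bochner--Cheeger-Gromoll theorem, a compact manifold with non-negative Ricci curvature whose first Betti number equals its dimension is a flat torus, so $M'$ is flat, whence $M$ is flat. Then $M$ is a compact flat $n$-manifold with $\pi_1(M)\cong \Gamma$, and Bieberbach's rigidity theorem---that compact flat manifolds with isomorphic fundamental groups are affinely diffeomorphic---identifies $M$ with $\mathbb{E}^n/\Gamma$, giving (iii). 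The main obstacle is the rigidity step (ii): producing the harmonic spinor rests on the relative index theorem applied along the enlargeability covers, with attention to the fact that only a \emph{finite} cover of $M$ need be spin, and the passage from Ricci-flat to genuinely flat is not automatic, relying essentially on both the Bochner--Cheeger-Gromoll splitting and the arithmetic of Bieberbach groups; by comparison, claim (i) is immediate once enlargeability is in hand.
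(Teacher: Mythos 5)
Your steps (i) and (iii) are sound, and they reconstruct exactly what the paper relies on (the paper does not actually prove this theorem: it states it as a citation of Gromov--Lawson and Bieberbach, after setting up enlargeability and its homotopy invariance). The genuine gap is the mechanism of your step (ii). The Gromov--Lawson enlargeability argument does \emph{not} force a nonzero harmonic spinor for the untwisted Dirac operator. What it produces, on each $\epsilon^{-1}$-hyperspherical spin cover, is a harmonic section of the spinor bundle twisted by an auxiliary bundle $E$ pulled back from $S^n$, and the relevant Lichnerowicz identity is $D_E^2 = \nabla^{\ast}\nabla + \frac{1}{4}Scal + \mathcal{R}^E$, where the twisting term $\mathcal{R}^E$ is not zero but only of order $\epsilon$, and only on the $\epsilon$-dependent cover; since these covers grow as $\epsilon \to 0$, there is no fixed manifold on which a limiting argument extracts a parallel spinor, so your displayed integrated identity is not available. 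For the untwisted operator there is no index input at all: the spin covers you use are homotopy tori, which are parallelizable, so $\hat{A} = 0$ and (for $n \geq 3$) the $\alpha$-invariant vanishes as well; with $Scal \geq 0$ alone nothing forces $\ker D \neq 0$. A concrete counterexample to your assertion: the standard flat $T^n$ equipped with a spin structure for which some circle factor bounds is enlargeable, spin, and scalar-flat, yet it carries \emph{no} nonzero harmonic spinor (by Lichnerowicz, harmonic would mean parallel, and the flat spinor bundle has holonomy $-\mathrm{Id}$ around that circle). So the existence claim on which your step (ii) rests can simply fail.

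The repair is the deformation argument, which is Gromov--Lawson's actual route in \cite{[GL]} and is exactly what the paper packages as Lemma \ref{Lemma Blackbox}. If $Scal_g \geq 0$ and $Scal_g > 0$ somewhere, a Kazdan--Warner-type conformal deformation produces a metric of positive scalar curvature, contradicting your step (i); hence $g$ is scalar-flat. If $g$ is scalar-flat but not Ricci-flat, consider $g_t = g - t\,\mathrm{Ric}_g$: using $\mathrm{tr}_g\,\mathrm{Ric}_g = Scal_g = 0$ and $\mathrm{div}_g\,\mathrm{Ric}_g = \frac{1}{2}\,d\,Scal_g = 0$, the first variation of scalar curvature along this path is $|\mathrm{Ric}_g|^2 \geq 0$, positive somewhere, and a further conformal adjustment (first eigenfunction of the conformal Laplacian) again yields positive scalar curvature, a contradiction; hence $g$ is Ricci-flat --- this is precisely the ``any scalar-flat metric is Ricci-flat'' clause of Lemma \ref{Lemma Blackbox}. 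With Ricci-flatness established, your step (iii) --- Bochner on the finite torus cover with $b_1 = n$, flatness descending along the finite covering, then Bieberbach rigidity to identify $M$ with $\mathbb{E}^n/\Gamma$ --- goes through unchanged and completes the proof.
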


We finish this brief section with a pair of sample results.

\begin{theorem}{\label{Theorem HP}} Hitchin \cite{[NH]}, Petean \cite{[JP]}. Let $\Sigma$ be a homotopy n-sphere such that $\alpha(\Sigma)\neq 0$. Its Yamabe invariant is \begin{equation} \mathcal{Y}(\Sigma) = 0\end{equation}and it is not realized, i.e., there are no scalar-flat Riemannian metrics on $\Sigma$.
\end{theorem}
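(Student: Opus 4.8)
The plan is to prove Theorem \ref{Theorem HP} by combining two separately-known facts: a non-existence result for positive scalar curvature metrics (due to Hitchin) with a non-realization result for scalar-flat metrics (due to Petean), and then packaging them through Lemma \ref{Lemma Blackbox}. First I would recall the relevant invariant: for a homotopy $n$-sphere $\Sigma$, the Hitchin $\alpha$-invariant $\alpha(\Sigma)\in KO_n(\mathrm{pt})$ is a spin-cobordism obstruction, and Hitchin's theorem in \cite{[NH]} asserts that if $\alpha(\Sigma)\neq 0$ then $\Sigma$ admits no Riemannian metric of positive scalar curvature. This immediately rules out $\mathcal{Y}(\Sigma) > 0$, since a positive Yamabe invariant is equivalent to the existence of a positive scalar curvature metric \cite{[BeY]}.

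Next I would argue that $\mathcal{Y}(\Sigma)\geq 0$, so that combined with the previous step we obtain $\mathcal{Y}(\Sigma)=0$. The cleanest route is to exhibit a collapse with bounded scalar curvature, or more directly to invoke that a homotopy sphere, being simply connected of dimension $n\geq 5$, certainly admits an $\mathcal{F}$-structure (for instance via the pseudofree circle actions of Montgomery-Yang used in Corollary \ref{Corollary HRPN}, or simply a nontrivial circle action on $\Sigma$); then Theorem \ref{Theorem PP1} gives $\Vol_K(\Sigma)=0$ and collapse with scalar curvature bounded below, which by the equivalence cited after Theorem \ref{Theorem PP1} yields that $\mathcal{Y}(\Sigma)$ is nonnegative. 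Since $\alpha(\Sigma)\neq 0$ precludes positivity, we conclude $\mathcal{Y}(\Sigma)=0$.

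For the non-realization statement I would invoke Petean's result \cite{[JP]} directly. Here the key input is that Lemma \ref{Lemma Blackbox} forces any scalar-flat metric on $\Sigma$ to be Ricci-flat. But a Ricci-flat metric on a simply connected manifold has restricted holonomy, and in particular a Ricci-flat simply connected manifold carrying the homotopy type of a sphere cannot exist nontrivially; more to the point, Petean has shown that a simply connected closed manifold of dimension $n\geq 5$ admitting a scalar-flat metric must in fact admit a metric of positive scalar curvature unless it is Ricci-flat, and a homotopy sphere with $\alpha(\Sigma)\neq 0$ can do neither. Thus a scalar-flat metric on $\Sigma$ would have to be Ricci-flat, contradicting $\alpha(\Sigma)\neq 0$ (a Ricci-flat metric would force the vanishing of the index obstruction), so no scalar-flat metric exists and $\mathcal{Y}(\Sigma)=0$ is not realized.

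The main obstacle I anticipate is the non-realization half rather than the vanishing. Ruling out positive scalar curvature is a clean application of Hitchin's index-theoretic obstruction, and nonnegativity of the Yamabe invariant follows from the general $\mathcal{F}$-structure machinery already assembled in this paper. The delicate point is excluding a scalar-flat metric: one must rule out the borderline Ricci-flat case, and here I would lean on the fact that a Ricci-flat metric on a spin manifold forces parallel spinors (by the Lichnerowicz-Weitzenb\"ock formula), which in turn makes the index $\alpha(\Sigma)$ vanish, directly contradicting the hypothesis $\alpha(\Sigma)\neq 0$. Making the interplay between the Gromov-Lawson/Petean positivity dichotomy and this Ricci-flat obstruction precise is the step I would spend the most care on.
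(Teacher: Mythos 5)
Your overall architecture (Hitchin kills positivity, something gives nonnegativity, a spinor argument kills realization) matches the paper's, but both of your key steps have genuine gaps. For nonnegativity, you propose to get $\mathcal{Y}(\Sigma)\geq 0$ from an $\mathcal{F}$-structure on $\Sigma$, citing the Montgomery--Yang pseudofree circle actions or ``a nontrivial circle action on $\Sigma$''. This does not work: Montgomery--Yang \cite{[MoYa]} concerns homotopy $7$-spheres, and since $KO_7=0$ every homotopy $7$-sphere has $\alpha=0$, so that case is vacuous here. The spheres relevant to Theorem \ref{Theorem HP} live in dimensions $\equiv 1,2 \pmod 8$ (the first are $9$ and $10$), and neither this paper nor the literature it cites produces a circle action or an $\mathcal{F}$-structure on an \emph{arbitrary} homotopy sphere in those dimensions; the existence of a smooth circle action on a given exotic sphere is not known in general. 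The paper sidesteps this entirely with a single citation: Petean's theorem \cite{[JP]} states that the Yamabe invariant of every closed simply connected manifold of dimension at least $5$ is nonnegative, which applies to $\Sigma$ with no geometric structure needed. That is the fix for your second paragraph, and it is why Petean appears in the attribution of the theorem.

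The non-realization step --- which you yourself flagged as the delicate one --- has its implications pointing the wrong way. Your chain is: scalar-flat $\Rightarrow$ Ricci-flat (Lemma \ref{Lemma Blackbox}), Ricci-flat $\Rightarrow$ parallel spinors, parallel spinors $\Rightarrow$ $\alpha(\Sigma)=0$, contradiction. The middle implication is false: a Ricci-flat metric need not carry any parallel spinor, and your auxiliary claim that a simply connected Ricci-flat manifold ``has restricted holonomy'' is precisely a well-known open problem, not a theorem. The last implication is also false, indeed backwards: a parallel spinor is a harmonic spinor, so it contributes to $\ker D$ rather than forcing the index to vanish; nonzero $\alpha$ \emph{predicts} harmonic spinors, it is not contradicted by them. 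The paper's argument (Hitchin's) runs in the opposite direction: if $g$ is scalar-flat and $\alpha(\Sigma)\neq 0$, the index theorem produces a nontrivial harmonic spinor, and the Weitzenb\"ock formula $D^2=\nabla^*\nabla+\mathrm{scal}/4$ with $\mathrm{scal}=0$ forces that spinor to be parallel; a parallel spinor reduces the holonomy of $(\Sigma,g)$, and special holonomy is impossible for a homotopy sphere, since every admissible holonomy reduction carries cohomology (K\"ahler classes, parallel forms) that a homotopy sphere does not have. So the contradiction is with the generic holonomy of a homotopy sphere, not with $\alpha$ itself; Ricci-flatness and Lemma \ref{Lemma Blackbox} play no role in the correct proof.
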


\begin{proof} Petean has shown that the Yamabe invariant of a closed simply connected n-manifold with $n\geq 5$ is nonnegative \cite{[JP]}. Hitchin has shown that $\alpha(\Sigma)\neq 0$ implies that $\Sigma$ does not admit a Riemannian metric of positive scalar curvature \cite{[NH]}. Since the Yamabe invariant of a closed manifold is positive if and only if the manifold admits a Riemannian metric of positive scalar curvature, these results imply the vanishing $\mathcal{Y}(\Sigma) = 0$. To argue that the Yamabe invariant is unrealized, we proceed by contradiction. Suppose there exists a scalar-flat Riemannian metric $g$ on $\Sigma$. Since $\alpha(\Sigma)\neq 0$ by assumption, the homotopy sphere has a non-trivial parallel spinor, and the Riemannian manifold $(\Sigma, g)$ thus has special holonomy \cite{[NH]}. This is a contradiction, since homotopy spheres have generic holonomy. Hence, $\Sigma$ does not admit a scalar-flat Riemannian metric, and its Yamabe invariant is not realized.

\end{proof}

It is unknown if the minimal entropy  depends on the choice of smooth structure. In this paper, we provide a myriad of examples in terms of homeomorphism classes where it does not.

\begin{proposition}{\label{Proposition Proj}} There exists a smoothing $M$ of $\mathbb{CP}^5$ such that \begin{equation}\He(M) = 0 = \Vol_K(M),\end{equation} and $M$ collapses with sectional curvature bounded from below.

Its Yamabe invariant is \begin{equation} \mathcal{Y}(M) = 0,\end{equation} and it is not realized, i.e., there are no scalar-flat Riemannian metrics on $M$.

\end{proposition}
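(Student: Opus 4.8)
The plan is to realize the smoothing as a connected sum $M := \mathbb{CP}^5 \# \Sigma^{10}$, where $\Sigma^{10}$ is a homotopy $10$-sphere with non-vanishing $\alpha$-invariant, $\alpha(\Sigma^{10}) \neq 0$. Such Hitchin spheres exist because $\alpha: \Theta_{10} \to KO_{10} = \mathbb{Z}/2$ is surjective in this dimension ($10 \equiv 2 \pmod{8}$), by Hitchin \cite{[NH]}. Since $\Sigma^{10}$ is homeomorphic to $S^{10}$ (Smale), $M$ is homeomorphic to $\mathbb{CP}^5$; and since the $\alpha$-invariant is additive under connected sum while $\alpha(\mathbb{CP}^5) = 0$ (the Fubini--Study metric has positive scalar curvature), one gets $\alpha(M) = \alpha(\Sigma^{10}) \neq 0$, so $M$ is not diffeomorphic to $\mathbb{CP}^5$ and is a genuine smoothing. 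The first block of conclusions, $\He(M) = 0 = \Vol_K(M)$ together with collapse with sectional curvature bounded from below, will follow from Theorem \ref{Theorem PP1} once $M$ carries an $\mathcal{F}$-structure. To build one I would take the circle subaction of the standard $T^5$-action on $\mathbb{CP}^5$ (a $\mathcal{T}$-structure, non-polarized because of the fixed points, which is all Theorem \ref{Theorem PP1} requires) together with a circle action on $\Sigma^{10}$, and invoke the Paternain--Petean connected-sum construction for $\mathcal{T}$-structures in dimensions greater than two \cite{[PP1]}, the even-dimensional analogue of Proposition \ref{Proposition Connected} used for $\mathbb{E}^n/\Gamma \# \Sigma^n$ in Corollary \ref{Corollary HRPN}.

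For the Yamabe invariant: the manifold $M$ is spin, being homotopy equivalent to the spin manifold $\mathbb{CP}^5$ and $w_2$ a homotopy invariant, and $\alpha(M) \neq 0$, so the Lichnerowicz--Hitchin obstruction rules out any metric of positive scalar curvature, whence $\mathcal{Y}(M) \leq 0$. On the other hand the $\mathcal{F}$-structure makes $M$ collapse with bounded scalar curvature (Theorem \ref{Theorem PP1} and the dimension-$\geq 3$ equivalence recorded in Section \ref{Section CG}), so Lemma \ref{Lemma Blackbox} gives $\mathcal{Y}(M) = 0$.

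The non-realization is the heart of the matter and runs as in the proof of Theorem \ref{Theorem HP}, but with a different final contradiction. Suppose $g$ is scalar-flat on $M$. Since $\alpha(M) \neq 0$ and $\mathrm{Scal}_g = 0$, the Lichnerowicz formula forces a nonzero harmonic, hence parallel, spinor, so $(M,g)$ has special holonomy and is Ricci-flat. As $M$ is compact, simply connected, and of dimension $10$, the de Rham splitting together with Berger's list restricts the holonomy to exactly two possibilities: either it is irreducible $SU(5)$, so $M$ is a Calabi--Yau $5$-fold, or it splits as $SU(2) \times SU(3)$, so $M$ is a product of a $K3$ surface with a Calabi--Yau $3$-fold (every other partition of $10$ into admissible holonomy dimensions leaves a factor of dimension $1,2,3$ or $5$, forcing a flat and hence toral factor, which a simply connected manifold cannot carry). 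In the first case the parallel holomorphic volume form gives $h^{5,0}(M) = 1$, so $b_5(M) \geq h^{5,0} + h^{0,5} = 2$; in the second case $b_2(M) \geq b_2(K3) = 22$. Both contradict the Betti numbers $b_5(\mathbb{CP}^5) = 0$ and $b_2(\mathbb{CP}^5) = 1$, which $M$ inherits as a manifold homeomorphic to $\mathbb{CP}^5$. Hence no scalar-flat metric exists and $\mathcal{Y}(M) = 0$ is not realized.

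The step I expect to be the main obstacle is the $\mathcal{F}$-structure, more precisely producing a homotopy $10$-sphere that simultaneously has $\alpha \neq 0$ and admits a smooth circle action, so that the Paternain--Petean connected-sum machinery applies to $\mathbb{CP}^5 \# \Sigma^{10}$. This is the dimension-$10$ counterpart of the Montgomery--Yang input (pseudofree circle actions on homotopy $7$-spheres) exploited in Corollary \ref{Corollary HRPN}, and unlike that case it is not immediate that the $\alpha$-nonzero classes in $\Theta_{10}$ are represented by spheres carrying circle actions; this is where I would concentrate the technical work. The holonomy classification feeding the non-realization is the other substantive ingredient, but it is standard once the parallel spinor has been produced.
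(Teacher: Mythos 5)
Your proposal follows essentially the same route as the paper's proof: the paper also takes $M = \mathbb{CP}^5 \# \Sigma$ with $\alpha(\Sigma) \neq 0$ (citing Milnor \cite{[Mil3]} and Adams \cite{[Ad]} for existence), builds a $\mathcal{T}$-structure from nontrivial circle actions on both summands via Paternain--Petean's connected-sum theorem \cite[Theorem 5.9]{[PP1]}, obtains $\He(M) = 0 = \Vol_K(M)$ and the collapsing statement from Theorem \ref{Theorem PP1}, and gets $\mathcal{Y}(M) = 0$ from additivity of $\alpha$ together with the resulting obstruction to positive scalar curvature. Two points of comparison are worth recording. First, the obstacle you flag as the main technical issue --- a homotopy $10$-sphere carrying both $\alpha \neq 0$ and a smooth circle action --- is exactly the ingredient the paper supplies by citation rather than by construction: Schultz's work on circle actions on homotopy spheres \cite{[RS2]} provides the required actions, so no new work is needed there. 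Your instinct that this is the one nonformal input is sound; note that such a sphere cannot bound a parallelizable manifold (its $\alpha$-invariant would vanish, being a spin-cobordism invariant), so the Brieskorn and Montgomery--Yang constructions used elsewhere in the paper indeed cannot produce it, and Schultz's theorem is genuinely what is needed.

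Second, your non-realization argument is more complete than what the paper records. The paper disposes of scalar-flat metrics by deferring to the proof of Theorem \ref{Theorem HP}, whose final contradiction is that homotopy spheres have generic holonomy --- an argument resting on the vanishing of all intermediate Betti numbers. That reasoning does not transfer verbatim to a manifold homeomorphic to $\mathbb{CP}^5$, where $b_2 = 1$ and $b_4 = 1$ are perfectly compatible with K\"ahler, indeed with special, holonomy; one really does need the finer step you give: the parallel spinor produced by Lichnerowicz's formula forces, via the de Rham splitting and Berger's list, reduced holonomy exactly $SU(5)$ or $SU(2) \times SU(3)$, and then $b_5(M) \geq 2$, respectively $b_2(M) \geq 23$, contradicts the Betti numbers $M$ inherits from $\mathbb{CP}^5$. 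So on this point your write-up fills in a detail that the paper leaves implicit, and it is the correct way to make the quoted argument rigorous in this homeomorphism type.
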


\begin{proof} Let $\Sigma$ be a homotopy 10-sphere such that $\alpha(\Sigma)\neq 0$; this sphere was proven to exist in \cite{[Mil3], [Ad]}. The smooth manifold $M:= \mathbb{CP}^5\# \Sigma$ is homeomorphic to the complex projective 5-space. There is a nontrivial circle action on $\mathbb{CP}^5$ and on $\Sigma$ (see \cite{[RS2]}), and \cite[Theorem 5.9]{[PP1]} implies that there is a $\mathcal{T}$-structure on $M$. Theorems \ref{Theorem PP1} implies that the minimal entropy of $M$ is zero, and that it collapses with sectional curvature bounded from below. In particular, $\mathcal{Y}(M) \geq 0$. This also follows from Petean's result \cite{[JP]}. Since $\alpha(M) = \alpha(\mathbb{CP}^5\# \Sigma) = \alpha(\mathbb{CP}^5) + \alpha(\Sigma)$ \cite{[NH]}, it follows that $\mathcal{Y}(M) = 0$. The claim about the nonexistence of scalar-flat Riemannian metrics on $M$ follows from the argument given in the proof of Theorem \ref{Theorem HP} (cf. Lemma \ref{Lemma Blackbox}).
\end{proof}

\subsection{Homotopy spheres and homotopy real projective spaces}\label{Section Spheres}Existence questions of circle actions on homotopy spheres has been a classical area of research since the late 1960's see \cite{[MoYa], [RS2]} and references therein. These results yield We now mention some more constructions of $\mathcal{T}$-structures on closed smooth manifolds on the homeomorphism class of spheres and on the homotopy equivalence class of real projective spaces. A $\mathcal{T}$-structure can be constructed on any homotopy odd-dimensional sphere that bounds a parallelizable manifold as follows. The group of homotopy spheres that bound parallelizable manifolds are generated by the Kervaire spheres $\Sigma^{2n - 1}(d)$ \cite{[KeM]}. All Kervaire spheres arise as Brieskorn manifolds in the following manner \cite{[Br], [Hir]}. Suppose $n\geq 3$ is an odd integer number, and $d = \pm 3$ mod 8. Then a homotopy sphere $\Sigma^{2n - 1}(d)$ arises as a real algebraic submanifold of $\C^{n + 1}$ defined by the equations\begin{equation}z_0^d + z_1^2 + \ldots + z_n^2 = 0 \end{equation}and\begin{equation} |z_0|^2 + |z_1|^2 + \ldots + |z_n|^2 = 1.\end{equation}Scalar multiplication on $\C^{n + 1}$ yields a circle action on $\Sigma^{2n - 1}(d)$ and using a result of Paternain-Petean \cite[Theorem 5.1]{[PP1]}. we conclude that every homotopy odd-dimensional sphere that bounds a parallelizable manifold admits a $\mathcal{T}$-structure. A polarized $\mathcal{T}$-structure on these manifolds is constructed using pseudo-free circle actions as in Example \ref{Example MY}. Montgomery-Yang have shown that all homotopy 7-spheres admit such an action \cite{[MoYa]}.\\

The $\mathcal{T}$-structure on the Brieskorn manifolds descend to a $\mathcal{T}$-structure on closed manifolds that are homotopy equivalent to $\R P^{2n - 1}$. Indeed, there is an orientation-preserving fixed point involution $T: \Sigma^{2n - 1}(d)\rightarrow \Sigma^{2n - 1}(d)$ given by \begin{equation}(z_0, z_1, z_2,\ldots, z_{n - 1}, z_n)\mapsto (z_0, - z_1, - z_2, \ldots, -z_{n - 1}, -z_n).\end{equation}The circle action commutes with the involution and yields a $\mathcal{T}$-structure on the orbit space $\Sigma^{2n - 1}(d)/T$. The latter is a closed manifold that is homotopy equivalent to $\R P^{2n - 1}$ yet it need not be homeomorphic nor diffeomorphic to it \cite{[LMe]}.\\

Another construction of $\mathcal{T}$-structures arises as a generalization of Example \ref{Example SR} in terms of a broader set of choices for the decomposition of $S^n$ into two building blocks that was given in (\ref{Decomposition S1}), and the different choices of diffeomorphisms that are used to identify the pieces together. Consider now\begin{equation}\label{Decomposition NewS1}S^n = \partial D^{n + 1} = \partial (D^k\times D^{n + 1 - k}) = (S^{k - 1}\times D^{n + 1 - k})\cup_{\id}(D^k\times S^{n - k}).\end{equation}Build a homotopy sphere\begin{equation}\label{HomFst}\Sigma^n = (S^{k - 1}\times D^{n + 1 - k})\cup_{\varphi_{k, n}} (D^k\times S^{n - k})\end{equation}such that the diffeomorphism of the common boundaries\begin{equation}\varphi_{k, n}: S^{k - 1}\times S^{n - k}\rightarrow S^{k - 1}\times S^{n - k}\end{equation} satisfies $\pi_{S^{k - 1}}\circ \varphi_{k, n} = \pi_{S^{k - 1}}$, where $\pi_{S^{k - 1}}$ is the projection to the sphere factor \cite{[GW]}. A $\mathcal{T}$-structure is given by taking the open cover $U_1:= S^{k - 1}\times D^{n + 1 - k}$ and $U_2: = D^k\times S^{n - k}$ and choosing torus actions that commute with the gluing diffeomorphism. The Gromoll groups indicate when the homotopy n-sphere (\ref{HomFst}) is exotic \cite[Section 3]{[GW]}.

\subsection{Homotopy Euclidean space forms}\label{Section Tori}  The main result of this section is Theorem \ref{Theorem Davis}. In order to share light on it, we begin by discussing homotopy tori. As it was mentioned in the introduction, Casson, Wall and Hsiang-Shaneson have shown existence of inequivalent smoothings (up to diffeomorphism) and inequivalent PL-structures (up to PL-homeomorphism) on homotopi tori. The fundamental theorem of smoothing theory states that a topological manifold of dimension at least five admits a smooth structure if and only if its topological tangent bundle admits the structure of a vector bundle. A homotopy n-torus with a PL-structure is parallelizable, hence smoothable \cite[Chapter 15A]{[Wall]}. The set $\mathcal{S}^{PL}(T^n)$ of equivalence classes up to PL-homeomorphism of closed n-dimensional PL-manifolds that are homotopy equivalent to the n-torus is in bijective correspondence with $H^3(T^n; \Z/2)$ \cite{[W], [HS], [Wall]}. There is an isomorphism \begin{equation} [T^n, G/PL]\cong \underset{0\leq i\leq n}\bigoplus H^i(T^n; \pi_i(G/PL)),\end{equation} where the groups $\pi_i(G/PL)$ are finite. According to Wall \cite[Corollary]{[W]} \cite[Page 236]{[W]} and Hsiang-Shaneson \cite[Theorem B]{[HS]}, a closed n-dimensional PL-manifold that is homotopy equivalent to the n-torus with $n\geq 5$ is finitely covered by the standard $T^n$. The normal PL-invariants are natural for coverings and any element of the group $H^r(T^n; A)$ with $A$ finite is killed by passing to a finite cover (an argument to show this will be given in the proof of Theorem \ref{Theorem Davis}). Coverings corresponding to subgroups of $\pi_1 = \Z^n$ of odd index are fake tori; those that correspond to subgroups of even index are standard. The set of smoothings on a PL-structure class are in bijective correspondence with elements of \begin{equation}[T^n, PL/O]\cong \underset{i\leq n}\bigoplus H^i(T^n; \pi_i(PL/O))\end{equation} and the normal invariants, as in the PL-structures case, are natural for coverings \cite{[Wall]}; indeed, the homotopy groups of the loop space $PL/O$ are finite and any element of the cohomology group of a homotopy n-torus with coefficients in the finite group $\pi_i(PL/O)$ can be killed by passing to a finite cover. We are indebted to Terry Wall for explaining this to us. Jim Davis \cite[Appendix]{[Davis]} has an excellent synthesized description of these properties, and his synthesis will be used in the proof of Theorem \ref{Theorem Davis}. Corollary \ref{Corollary P} and Proposition \ref{Proposition P} are proven using the following result.

\begin{theorem}{\label{Theorem FCTori}} Hsiang-Shaneson \cite{[HS]}, Wall \cite{[W], [Wall]}. For $n\geq 5$, every homotopy n-torus is finitely covered by the standard $T^n$.

\end{theorem}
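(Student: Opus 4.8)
The plan is to prove that every homotopy $n$-torus $M$ (with $n \geq 5$) is finitely covered by the standard torus $T^n$ by combining the classification of homotopy tori up to PL-homeomorphism with the naturality of normal invariants under passage to finite covers. First I would recall the surgery-theoretic setup described in Section \ref{Section Tori}: by the fundamental theorem of smoothing theory and the parallelizability of PL homotopy tori, it suffices to handle the problem at the level of PL-structures and then account for smoothings. The PL-structure set $\mathcal{S}^{PL}(T^n)$ is in bijection with $H^3(T^n; \Z/2)$, and the smoothing set over a fixed PL-structure is controlled by $[T^n, PL/O] \cong \bigoplus_{i \leq n} H^i(T^n; \pi_i(PL/O))$, where every coefficient group $\pi_i(PL/O)$ is finite. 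The key structural fact I would invoke is that the normal invariants classifying both the PL-structures and the smoothings are \emph{natural} with respect to finite coverings.

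The heart of the argument is the following transfer/multiplication observation. Given a homotopy $n$-torus $M$, its classifying normal invariant lives in a cohomology group $H^r(T^n; A)$ with $A$ a finite abelian group. If $p : T^n \to T^n$ is the degree-$d$ covering induced by multiplication by $d$ on $\Z^n = \pi_1(T^n)$, then the induced map $p^*$ on $H^r(T^n; A)$ acts (up to the cup-product structure) as multiplication by a power of $d$ on each cohomology class, since $p^*$ multiplies each one-dimensional cohomology generator by $d$. Choosing $d$ to be the exponent of the finite group $A$ (or more carefully, a multiple annihilating all the relevant groups simultaneously across degrees $r = 1, \ldots, n$), the pullback $p^*$ kills the normal invariant. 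Concretely, I would argue that for an element $\alpha \in H^r(T^n; A)$ expressed in the standard basis of products of degree-one classes, $p^* \alpha = d^r \alpha$, so taking $d$ divisible by a high enough power of the exponent of $A$ forces $p^* \alpha = 0$. This is precisely the claim flagged parenthetically in the text: "any element of the group $H^r(T^n; A)$ with $A$ finite is killed by passing to a finite cover."

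Having killed the normal invariant, the naturality of the structure-set bijections then shows that the pulled-back structure on the covering torus is the \emph{trivial} (standard) element of $\mathcal{S}^{PL}$ and of the smoothing set, i.e. the finite cover of $M$ corresponding to the subgroup $d \cdot \Z^n \subset \Z^n$ is PL-homeomorphic, and then diffeomorphic, to the standard $T^n$. I would assemble this by first trivializing the PL normal invariant in $H^3(T^n; \Z/2)$ to reduce to a genuine smooth torus-PL-structure, and then trivializing the smoothing normal invariant in $[T^n, PL/O]$ by a possibly further finite cover; composing the two covers yields a single finite cover of $M$ by the standard $T^n$. The main obstacle I anticipate is the careful bookkeeping in the multiplication-by-$d$ computation: one must verify that the transfer or pullback genuinely acts by a positive power of $d$ on each graded piece (so that a single $d$ annihilates all finitely many relevant classes at once), and that the identification of the standard structure as the basepoint of the structure set is compatible with this pullback. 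This is exactly the synthesized description attributed to Jim Davis \cite[Appendix]{[Davis]}, which I would cite to streamline the verification of naturality and the annihilation argument.
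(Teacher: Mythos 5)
Your proposal is correct, and its overall architecture is the same as the paper's: invoke the Hsiang--Shaneson/Wall classification $\mathcal{S}^{PL}(T^n)\cong H^3(T^n;\Z/2)$ and the smoothing set $[T^n,PL/O]\cong\bigoplus_i H^i(T^n;\pi_i(PL/O))$, use naturality of these invariants under finite covers, kill the PL invariant first and the smoothing invariant by a further cover, and compose the two covers. (The paper itself treats this theorem as a citation, with exactly this sketch in the paragraph preceding the statement, deferring the cohomology-annihilation argument to the proof of Theorem~\ref{Theorem Davis}.) Where you genuinely differ is in \emph{how} the finite-coefficient cohomology classes are killed. You use the multiplicative structure of $H^*(T^n;A)$: since every class is a sum of products of degree-one classes and the multiplication-by-$d$ self-cover $p$ satisfies $p^*=d\cdot\mathrm{id}$ on $H^1$, one gets $p^*=d^r\cdot\mathrm{id}$ on $H^r$, so a single $d$ divisible by the exponents of all the (finitely many) relevant coefficient groups annihilates everything in positive degree at once. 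This is more elementary and self-contained than the paper's argument, which handles $i=1$ via $H^1(T^n;A)\cong\Hom(\Z^n,A)$ and then inducts on $n$ using the Gysin sequence of the principal bundle $S^1\to T^n\to T^{n-1}$. The trade-off: your computation exploits that $H^*(T^n)$ is generated in degree one, which is special to the torus, whereas the paper's Gysin induction is formulated so that it verifies Davis's Condition~\ref{Definition Condition1} in a form that extends to iterated circle bundles (nilmanifolds, cf.\ Remark~\ref{Remark NilDavis}), which is what Theorem~\ref{Theorem Davis} and its intended generalizations require. As a consistency check, your formula $p^*\alpha=d^3\alpha$ on $H^3(T^n;\Z/2)$ also recovers the paper's remark that odd-index covers of this form stay fake while suitable even-index covers become standard. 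Like the paper, you treat the classification itself and the naturality of the structure-set bijections as black boxes from Wall and Hsiang--Shaneson, so the two arguments sit at the same level of rigor.
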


Let us now move into homotopy Euclidean space forms in full generality. Regarding the structure of their topological tangent bundle, the reader is directed to \cite{[JTh]} and we now consider smoothable homotopy Euclidean space forms. We now present the main result in this section, which is a consequence of the aforementioned results of Hsiang-Shaneson and Wall.

\begin{theorem}\label{Theorem Davis}Let $M$ be a homotopy Euclidean space form $\mathbb{E}^n/\Gamma$. There is a homeomorphism\begin{equation}\label{Homeo FJ}\varphi:M\rightarrow \mathbb{E}^n/\Gamma\end{equation}and a finite cover\begin{equation}\label{Cover Bieberbach}T^n\rightarrow \mathbb{E}^n/\Gamma\end{equation}so that the induced pullback homeomorphism\begin{equation}\label{Basic Point}\widetilde{M}\rightarrow T^n\end{equation}is isotopic to a diffeomorphism.
\end{theorem}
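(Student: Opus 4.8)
The plan is to combine topological rigidity, Bieberbach's theorem, and the naturality of smoothing obstructions under finite coverings. Throughout I assume $n\geq 5$, so that the tools of surgery and smoothing theory invoked in Section \ref{Section Tori} apply.

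First I would produce the homeomorphism $\varphi$. Since $M$ is homotopy equivalent to the aspherical manifold $\mathbb{E}^n/\Gamma$, whose fundamental group $\Gamma$ is crystallographic and hence one for which the Borel conjecture is known (Farrell-Jones \cite{[FJo]}, applied to the zero-curvature manifold $\mathbb{E}^n/\Gamma$), the given homotopy equivalence is homotopic to a homeomorphism $\varphi:M\rightarrow \mathbb{E}^n/\Gamma$. Bieberbach's theorem supplies a finite-index normal subgroup $\Gamma_0\cong \mathbb{Z}^n$ of $\Gamma$, and hence a finite covering $T^n\rightarrow \mathbb{E}^n/\Gamma$ by a flat torus. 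Pulling $\varphi$ back along this covering yields a finite covering $\widetilde{M}\rightarrow M$ together with the lifted homeomorphism $h:\widetilde{M}\rightarrow T^n$; here $\widetilde{M}$ carries the smooth structure induced from that of $M$, so it is a smooth homotopy $n$-torus.

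Next I would show that, after replacing the torus covering by a sufficiently fine one, the lift $h$ becomes isotopic to a diffeomorphism. Transport the smooth structure of $\widetilde{M}$ across $h$ to a smooth structure $\sigma$ on $T^n$; the obstruction to $\sigma$ being concordant to the standard structure is its normal invariant, which by smoothing theory lies in $[T^n,PL/O]\cong \bigoplus_{i\geq 1}H^i(T^n;\pi_i(PL/O))$, after first disposing of the analogous PL class in $[T^n,G/PL]$ (a reduction available because homotopy tori are parallelizable, cf. \cite[Chapter 15A]{[Wall]}). The groups $\pi_i(PL/O)$ and $\pi_i(G/PL)$ are finite, and the degree-$d$ self-covering $p_d:T^n\rightarrow T^n$ induces multiplication by $d$ on $H^1(T^n)$, hence multiplication by $d^i$ on $H^i(T^n)$. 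Choosing $d$ divisible by the exponents of all relevant finite coefficient groups in degrees $1\leq i\leq n$ therefore annihilates the pullback of the normal invariant, since every surviving summand sits in positive cohomological degree; this is precisely the mechanism underlying Theorem \ref{Theorem FCTori} and the synthesis of \cite[Appendix]{[Davis]}.

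Finally, once the normal invariant is killed on the cover induced by $p_d$, the pulled-back smooth structure is concordant to the standard one, and concordance implies isotopy in dimension $n\geq 5$ (Kirby-Siebenmann). Declaring the composite $T^n\xrightarrow{p_d}T^n\rightarrow \mathbb{E}^n/\Gamma$ to be the covering in the statement, and writing $\widetilde{M}$ for the corresponding pullback, the lifted homeomorphism $\widetilde{M}\rightarrow T^n$ is then isotopic to a diffeomorphism, as required. I expect the last passage to be the main obstacle: upgrading \emph{``the normal invariant is killed by a finite cover''} to \emph{``the homeomorphism is isotopic to a diffeomorphism''}, rather than merely to \emph{``$\widetilde{M}$ is abstractly diffeomorphic to $T^n$''}, demands the full concordance classification of smoothings together with concordance-implies-isotopy, and one must check that both the PL reduction and the smoothing step contribute only cohomology in degrees $i\geq 1$, so that the multiplication-by-$d^i$ computation genuinely eliminates every summand.
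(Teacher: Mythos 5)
Your skeleton matches the paper's: Farrell-Jones supplies the homeomorphism, Bieberbach supplies the torus cover, and the real content is showing that the lifted homeomorphism $\widetilde{M}\rightarrow T^n$ becomes isotopic to a diffeomorphism after passing to a further finite cover. The paper outsources exactly this step to Davis's Lemma A.3 (Lemma \ref{Lemma Davis}, from \cite[Appendix]{[Davis]}): a homeomorphism onto a manifold satisfying Condition \ref{Definition Condition1} is, after a finite cover, isotopic to a diffeomorphism; the paper then verifies the cohomological condition for $T^n$ by induction on $n$ using Gysin sequences. You instead inline the proof of that lemma for the torus: push the smooth structure across $h$, kill the resulting concordance obstruction by self-covers, and invoke concordance-implies-isotopy. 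Your observation that the self-cover multiplying each circle factor by $d$ acts as multiplication by $d^i$ on $H^i(T^n;A)$ is a cleaner substitute for the paper's Gysin induction, and your final composition $T^n\xrightarrow{p_d}T^n\rightarrow\mathbb{E}^n/\Gamma$ correctly produces the cover demanded by the statement.

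The genuine gap is your identification of the obstruction groups. For a \emph{homeomorphism}, the PL-stage obstruction is the Kirby-Siebenmann class in $[T^n,TOP/PL]\cong H^3(T^n;\Z/2)$, not a surgery normal invariant in $[T^n,G/PL]$; and your assertion that the groups $\pi_i(G/PL)$ are finite is false, since $\pi_{4k}(G/PL)\cong\Z$ (the same slip appears in the paper's background discussion of homotopy tori, but the paper's actual proof of Theorem \ref{Theorem Davis} never relies on it). This is not cosmetic: on the summands $H^{4k}(T^n;\Z)$ your covering map acts by multiplication by $d^{4k}$, which is \emph{injective}, so the ``choose $d$ divisible by all exponents'' mechanism cannot annihilate them --- precisely the check you flag at the end as the delicate point fails for $G/PL$. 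The repair exists exactly because you have a homeomorphism rather than a mere homotopy equivalence: either replace $G/PL$ by $TOP/PL$ (whose only nonzero homotopy group is $\Z/2$ in degree $3$) and keep $[T^n,PL/O]$ for the smoothing stage, or work directly with concordance classes of smoothings classified by $[T^n,TOP/O]$, all of whose homotopy groups are finite --- which is the formulation in Davis's lemma that the paper cites. (Alternatively one can argue that Novikov's topological invariance of rational Pontryagin classes forces the $\Z$-summand components of the normal invariant of a homeomorphism to vanish, but you would have to say so explicitly.) With that correction, the remaining steps --- naturality of the obstruction under coverings, concordance implies isotopy in dimension $\geq 5$, and the bookkeeping of covers --- go through as you describe.
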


The existence of the homeomorphism (\ref{Homeo FJ}) was established by Farrell-Jones \cite{[FJo]}, where they showed that any homotopy equivalence between such manifolds is isotopic to a homeomorphism. One of the Bieberbach Theorems states the existence of the finite covering (\ref{Cover Bieberbach}). It is immediate to see that Theorem \ref{Theorem Davis} follows from Theorem \ref{Theorem FCTori}. The justification for including the following proof is to flesh out details of the results in surgery theory that are being employed. We built greatly on work of Jim Davis \cite[Appendix]{[Davis]} for the following description.  

\begin{proof} We use work of Jim Davis to prove that the pullback homeomorphism (\ref{Basic Point}) is isotopic to a diffeomorphism. Two homeomorphisms are isotopic if there exists a homotopy between them that consists of homeomorphisms. We commence by recalling a necessary property.

\begin{condition}\label{Definition Condition1} For any $i > 0$, for any finite abelian group $A$, for any finite cover $\hat{p}: \hat{N}\rightarrow N$, and for any element $x\in H^i(\hat{N}; A)$, there exists a finite cover $\tilde{p}: \widetilde{N}\rightarrow \hat{N}$ such that $\tilde{p}^{\ast}x = 0$.
\end{condition}

An extension of the described at the beginning of the section for homotopy tori is the following lemma, which is the main ingredient to prove Theorem \ref{Theorem Davis}.

\begin{lemma}\label{Lemma Davis} Davis \cite[Lemma A.3]{[Davis]}. Let $\varphi: M\rightarrow N$ be a homeomorphism of closed smooth n-manifolds with $n\geq 5$, and assume $N$ satisfies Condition \ref{Definition Condition1}. There exists a finite cover \begin{equation}\hat{N}\rightarrow N\end{equation}such that the induced pull-back homeomorphism \begin{equation}\label{CoveringHNNew}\hat{M}\rightarrow \hat{N}\end{equation} is isotopic to a diffeomorphism.
\end{lemma}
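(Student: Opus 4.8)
The plan is to prove Lemma \ref{Lemma Davis} by reducing the isotopy question to the vanishing of the relative smoothing obstruction, and then to use Condition \ref{Definition Condition1} to kill that obstruction by passing to a suitable finite cover. Since $\varphi: M \to N$ is a homeomorphism of closed smooth manifolds of dimension $n \geq 5$, smoothing theory (the fundamental theorem referenced in Section \ref{Section Tori}) tells us that the question of whether $\varphi$ is isotopic to a diffeomorphism is governed by a single characteristic class, the \emph{smoothing obstruction}, living in $[N, PL/O]$. Concretely, $\varphi$ pulls back the smooth structure on $M$ to a new smooth structure on $N$, and the difference between this pulled-back structure and the given smooth structure on $N$ is classified by an element $\theta(\varphi) \in [N, PL/O]$; the homeomorphism $\varphi$ is isotopic to a diffeomorphism if and only if $\theta(\varphi) = 0$. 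The first step, then, is to set up this classifying element correctly and to record its naturality under pullback along coverings.

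\emph{The key naturality statement} I would establish is that smoothing obstructions are natural for finite covers: if $\hat{p}: \hat{N} \to N$ is a finite cover and $\hat{M} \to \hat{N}$ is the induced pullback homeomorphism, then the obstruction $\theta(\hat{\varphi}) \in [\hat{N}, PL/O]$ equals $\hat{p}^{\ast}\theta(\varphi)$. This is exactly the naturality of normal invariants under coverings emphasized in the discussion preceding Theorem \ref{Theorem FCTori}. Using the isomorphism
\begin{equation}
[N, PL/O] \cong \bigoplus_{i \leq n} H^i(N; \pi_i(PL/O)),
\end{equation}
where each coefficient group $\pi_i(PL/O)$ is \emph{finite}, I would write $\theta(\varphi)$ as a finite sum of cohomology classes $x_i \in H^i(N; \pi_i(PL/O))$. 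The essential point is that $\theta(\varphi)$ has no component in degree zero — the degree-zero part corresponds to a global change of smooth structure that is already accounted for by $M$ and $N$ both being smooth and $\varphi$ being a homeomorphism between them — so every nonzero component sits in a cohomology group $H^i(N; A)$ with $i > 0$ and $A$ finite abelian.

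\emph{The killing step} is where Condition \ref{Definition Condition1} enters. Each component $x_i \in H^i(N; \pi_i(PL/O))$ with $i > 0$ can be killed by passing to a finite cover by hypothesis: Condition \ref{Definition Condition1} provides a finite cover on which $\hat{p}^{\ast} x_i = 0$. Since there are only finitely many nonzero components (the sum ranges over $0 < i \leq n$), I would iterate or, more cleanly, take a common refinement — the fiber product of the finitely many covers furnished by Condition \ref{Definition Condition1}, one for each nonvanishing $x_i$ — to obtain a single finite cover $\hat{p}: \hat{N} \to N$ with $\hat{p}^{\ast} x_i = 0$ for all $i$. By the naturality established above, this forces $\theta(\hat{\varphi}) = \hat{p}^{\ast}\theta(\varphi) = 0$, and hence the pullback homeomorphism $\hat{M} \to \hat{N}$ of \eqref{CoveringHNNew} is isotopic to a diffeomorphism.

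\emph{The main obstacle} I anticipate is not the killing step, which is essentially formal once naturality is in hand, but rather the precise identification of $\theta(\varphi)$ as a well-defined element of $[N, PL/O]$ and the verification that it genuinely detects the existence of an isotopy (as opposed to a mere concordance or homotopy) of $\varphi$ to a diffeomorphism. In dimension $n \geq 5$ the concordance-implies-isotopy machinery and the structure-set formulation of smoothing theory do bridge this gap, but stating it carefully — and confirming that the degree-zero component of $\theta(\varphi)$ really does vanish in this relative setting — is the delicate part; this is precisely the bookkeeping that Davis's \cite[Appendix]{[Davis]} treatment handles, and I would follow his synthesis closely rather than reprove the smoothing-theoretic foundations from scratch.
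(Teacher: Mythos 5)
The first thing to note is that the paper does not prove this lemma at all: it is imported verbatim as Davis's result \cite[Lemma A.3]{[Davis]} and used as a black box inside the proof of Theorem \ref{Theorem Davis}, where the paper's own contribution is the verification that Euclidean space forms satisfy Condition \ref{Definition Condition1} (the Gysin-sequence induction). So your proposal is really being measured against Davis's argument, and in outline you have reconstructed it: transport the smooth structure of $M$ across $\varphi$, classify the difference by an obstruction with finite coefficients, record naturality under finite covers, and kill the obstruction using Condition \ref{Definition Condition1}, then invoke concordance-implies-isotopy in dimension $\geq 5$. Two points, however, are genuinely wrong as written.

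First, the obstruction does not live in $[N, PL/O]$. A homeomorphism does not carry PL structures to PL structures, so there is no well-defined ``difference of smoothings of a fixed PL manifold'' in this setting; the relevant framework is Kirby--Siebenmann smoothing theory for topological manifolds, and the obstruction lives in $[N, TOP/O]$ (equivalently, a two-stage comparison: first the Kirby--Siebenmann class in $[N, TOP/PL] \cong H^3(N; \Z/2)$, then a $PL/O$-obstruction once that vanishes). This does not break the strategy, since $\pi_i(TOP/O)$ is finite for every $i$, just as $\Z/2$ is, but the element $\theta(\varphi)$ as you define it is not an element of the group you name. Second, the splitting $[N, TOP/O] \cong \bigoplus_i H^i(N; \pi_i(TOP/O))$ is not available for a general $N$ satisfying Condition \ref{Definition Condition1}: the isomorphism $[T^n, PL/O]\cong \bigoplus_i H^i(T^n; \pi_i(PL/O))$ quoted in Section \ref{Section Tori} is a special feature of the torus, whereas Lemma \ref{Lemma Davis} must apply to arbitrary Euclidean space forms (indeed to nilmanifolds, cf. Remark \ref{Remark NilDavis}), for which the nontrivial k-invariants of $TOP/O$ obstruct any such direct-sum decomposition. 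The repair is to replace your ``components $x_i$ plus fiber product of covers'' step by successive obstruction theory over the Postnikov tower of $TOP/O$: the first nonvanishing obstruction to null-homotoping $\theta$ lies in some $H^i(\hat{N}; \pi_i(TOP/O))$ with $i > 0$ and finite coefficients, is natural under covers, and can be killed by Condition \ref{Definition Condition1}; one then iterates, and finite-dimensionality of $N$ guarantees termination. This is precisely why Condition \ref{Definition Condition1} is phrased for arbitrary intermediate covers $\hat{N}\rightarrow N$ rather than only for classes on $N$ itself as in Condition \ref{Definition Condition2}: the later obstructions are only defined on the covers produced by the earlier steps. With those two corrections, your sketch becomes Davis's proof.
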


To prove Theorem \ref{Theorem Davis}, we need to show that a homotopy Euclidean form satisfies Condition \ref{Definition Condition1} in order to invoke Lemma \ref{Lemma Davis}.

\begin{claim}\label{Claim Davis} Any Euclidean space form $\mathbb{E}^n/\Gamma$ satisfies Condition \ref{Definition Condition1}.
\end{claim}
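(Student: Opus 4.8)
The plan is to reduce everything to the standard torus, where the corresponding statement becomes an elementary cohomology computation. Since $\mathbb{E}^n/\Gamma$ is a genuine flat manifold, $\Gamma$ is a Bieberbach group, and by the first Bieberbach theorem it contains a normal translation lattice $L\cong\Z^n$ of finite index. A finite cover $\hat{p}:\hat N\to\mathbb{E}^n/\Gamma$ corresponds to a finite-index subgroup $\hat\Gamma\leq\Gamma$, and $\hat N=\mathbb{E}^n/\hat\Gamma$ is again flat. Setting $L':=L\cap\hat\Gamma$, I first observe that $L'$ is exactly the translation lattice of $\hat\Gamma$; it is of finite index in $\hat\Gamma$ (via $\hat\Gamma/(\hat\Gamma\cap L)\cong \hat\Gamma L/L\leq\Gamma/L$) and of full rank, so $L'\cong\Z^n$ and the quotient $T^n:=\mathbb{E}^n/L'$ is a torus that finitely covers $\hat N$; call this covering $r:T^n\to\hat N$. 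This is the structural input, and apart from invoking Bieberbach it is the only place where the hypothesis that $N$ is flat is used.

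Next, given $i>0$, a finite abelian group $A$, and a class $x\in H^i(\hat N;A)$, I produce the required cover of $\hat N$ by scaling the lattice. Let $m$ be the order (equivalently, the exponent) of $A$, and set $\widetilde N:=\mathbb{E}^n/mL'$. Since $mL'\leq L'\leq\hat\Gamma$ has finite index $m^n\,[\hat\Gamma:L']$, the projection $\tilde p:\widetilde N\to\hat N$ is a finite cover, and by construction it factors as $\widetilde N\xrightarrow{\,q\,}T^n\xrightarrow{\,r\,}\hat N$, where $q:\mathbb{E}^n/mL'\to\mathbb{E}^n/L'$ is the covering induced by the inclusion of lattices $mL'\hookrightarrow L'$; up to the canonical identification $\mathbb{E}^n/L'\cong T^n$ this is the multiplication-by-$m$ self-covering of the torus.

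It then remains to compute on the torus. Because $H^{\ast}(T^n;\Z)$ is free in every degree, the universal coefficient theorem gives $H^i(T^n;A)\cong\Lambda^i(\Z^n)\otimes A$ with no Tor contribution. The map $q$ is dual to the inclusion $mL'\hookrightarrow L'$, hence acts by multiplication by $m$ on $H^1(T^n;\Z)\cong\Z^n$, and therefore by $m^i$ on $H^i(T^n;\Z)=\Lambda^i(\Z^n)$; tensoring with $A$, it acts by $m^i$ on $H^i(T^n;A)$. For $i\geq 1$ and $m$ the order of $A$ one has $m^i a=0$ for every $a\in A$, so $q^{\ast}=0$ on $H^i(T^n;A)$. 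Consequently $\tilde p^{\ast}x=q^{\ast}(r^{\ast}x)=0$, which is precisely the assertion of Condition \ref{Definition Condition1}.

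I expect no serious obstacle: the content is entirely in the first paragraph, namely the Bieberbach structure theory that realizes the arbitrary finite cover $\hat N$ as a torus quotient with an honest $\Z^n$ translation lattice, so that lattice scaling produces multiplication-by-$m$ covers. The only point requiring a little care is verifying that $mL'$ is a finite-index subgroup of $\hat\Gamma$ (so that $\widetilde N\to\hat N$ is genuinely a finite cover) and that the covering $q$ restricts to the torus self-map $\times m$; both are immediate once $L'=L\cap\hat\Gamma$ has been identified as the translation lattice of $\hat\Gamma$.
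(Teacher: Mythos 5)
Your proof is correct, and it takes a genuinely different route from the paper's at the decisive step. Both arguments share the same structural reduction: by Bieberbach, every finite cover $\hat N$ of $\mathbb{E}^n/\Gamma$ is again flat and is finitely covered by a torus, so Condition \ref{Definition Condition1} follows once positive-degree classes with finite coefficients on $T^n$ can be killed by passing to finite covers (the paper's Condition \ref{Definition Condition2}); in fact your identification of $L' = L\cap\hat\Gamma$ as the translation lattice of $\hat\Gamma$ spells out this reduction more carefully than the paper does, which matters because Condition \ref{Definition Condition1} quantifies over \emph{all} finite covers $\hat N$. Where you diverge is on the torus itself: the paper argues by induction on $n$, treating $i=1$ via $H^1(T^n;A)\cong \Hom(\Z^n,A)$ and the step $i\geq 2$ via the Gysin sequence of the principal circle bundle $S^1\to T^n\to T^{n-1}$, whereas you use the single lattice-scaling self-cover of $T^n$ by $m$ (with $mA=0$), which acts by $m^i$ on $H^i(T^n;\Z)\cong\Lambda^i(\Z^n)$ by multiplicativity of pullback, hence by zero on $H^i(T^n;A)$. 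Your route is more elementary and more uniform --- one explicit cover kills every class in every positive degree simultaneously, with no induction --- and it is essentially the classical argument of Wall and Hsiang--Shaneson alluded to in Section \ref{Section Tori}. What the paper's Gysin induction buys instead is flexibility: it applies verbatim to iterated principal circle bundles, which is precisely the structure relevant to the extension to nilmanifolds indicated in Remark \ref{Remark NilDavis}, where no lattice-scaling self-cover is available. One harmless slip: the order and the exponent of a finite abelian group are not equivalent in general (take $(\Z/2)^2$), but your argument only needs some $m$ with $mA=0$, so either choice works.
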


One of the Bieberbach theorems states that there is a finite covering of $\mathbb{E}^n/\Gamma$ that is isometric to the n-torus. Thus, in order to prove Claim \ref{Claim Davis}, it suffices to check that $T^n$ satisfies the following condition.

\begin{condition}\label{Definition Condition2} For any $i > 0$, for any finite abelian group $A$, and for any element $x\in H^i(N; A)$, there exists a finite cover $\hat{p}: \hat{N}\rightarrow N$ such that $\hat{p}^{\ast}x = 0$.
\end{condition}

We proceed to show that the n-torus satisfies Condition \ref{Definition Condition2} with $\hat{N} = T^n = N$. The following argument fills in the details of the description of the normal invariants of a homotopy n-torus given at the beginning of the section. For $i = 1$, we have the commutative diagram\begin{equation}\begin{array}[c]{cccc}
H^1(T; A)&\stackrel{\cong}{\longrightarrow}& Hom(\pi_1(T^n), A) =& Hom(\Z^n, A)\\
\downarrow\scriptstyle{\hat{p}^{\ast}}&&\downarrow\scriptstyle{- \circ \hat{p}_{\ast}}\\
H^1(T^n; A)&\stackrel{\cong}{\longrightarrow}& Hom(\pi_1(T^n), A) =& Hom(\Z^n, A)
\end{array}\end{equation}
that show $\hat{p}^{\ast}x = 0$ for every element $x\in H^1(T^n, A)$.

We now argue in the case $i\geq 2$ by induction on $n$. A free circle action on $T^n$ gives rise to a principal $S^1$-bundle $S^1\rightarrow T^n\rightarrow T^n/S^1 = T^{n - 1}$. Suppose that Condition \ref{Definition Condition2} holds for any torus of smaller dimension. Canonically associated to the principal $S^1$-fibration we have \cite{[DavKirk]}\begin{equation}\cdots\longrightarrow H^{i - 2}(T^{n - 1}; A)\overset{\cup e}{\longrightarrow} H^1(T^{n - 2}; A)\overset{\pi^{\ast}}{\longrightarrow} H^i(T^n; A)\overset{\pi_!}\longrightarrow H^{i - 1}(T^{n - 1}; A)\longrightarrow \cdots \end{equation}the exact Gysin sequence. Our hypothesis on the dimension implies the existence of a cover $\tilde{p}: T^{n - 1}\rightarrow T^{n - 1}$ so that $\tilde{p}^{\ast}(\pi_! x) = 0$ for any element $H^i(T^{n - 1}; A)$ with $i\geq 2$. 
Consider now the diagram\begin{equation}\begin{array}[c]{cccc}
T^n&\stackrel{\cong}{\longrightarrow}& T^n/S^1 =& T^{n - 1}\\
\downarrow\scriptstyle{\tilde{p}}&&\downarrow\scriptstyle{\tilde{p}_{S^1}}\\
T^n&\stackrel{\cong}{\longrightarrow}& T^n/S^1 =& T^{n - 1}.
\end{array}\end{equation}It yields a map of principal $S^1$-bundles and a map of exact Gysin sequences. It is now immediate to conclude that $T^n$ satisfies Condition \ref{Definition Condition2}, thus $\mathbb{E}^n/\Gamma$ satisfies Condition \ref{Definition Condition1}, and Lemma \ref{Lemma Davis} is ready to be invoked. This concludes the proof of Theorem \ref{Theorem Davis}.
\end{proof}





We now proceed to prove that any smooth manifold homeomorphic to a high-dimensional torus can be equipped with a polarized $\mathcal{F}$-structure. 

\begin{corollary}{\label{Corollary EucSF}} Every homotopy Euclidean n-space form with $n\geq 5$ admits a polarized $\mathcal{F}$-structure.

In particular, every smoothing and every PL-structure on the homeomorphism type of $T^n$ with $n\geq 5$ admits a polarized $\mathcal{F}$-structure. 
\end{corollary}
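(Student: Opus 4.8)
The plan is to deduce Corollary \ref{Corollary EucSF} directly from the structural result established in Theorem \ref{Theorem Davis} together with the descent principle for $\mathcal{F}$-structures recorded in Proposition \ref{Proposition OrbitSpace}. The whole point of Theorem \ref{Theorem Davis} is that any homotopy Euclidean space form $M$ admits a finite cover $\widetilde{M}$ which is \emph{diffeomorphic} (not merely homeomorphic) to the standard torus $T^n$. Since the standard torus carries the obvious free $T^n$-action by translations, this diffeomorphism transports a free torus action onto $\widetilde{M}$, and Proposition \ref{Proposition OrbitSpace} then manufactures a polarized $\mathcal{F}$-structure on the base $M$. So the proof is essentially an assembly of the machinery already in place.

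More precisely, first I would invoke Theorem \ref{Theorem Davis} to obtain the finite cover $\pi:\widetilde{M}\to M$ and the diffeomorphism $f:\widetilde{M}\to T^n$ (coming from the pullback homeomorphism \eqref{Basic Point} being isotopic to a diffeomorphism). Second, I would push forward the standard free translation action $\psi: T^n\to \Diff(T^n)$ via $f$, setting $\widetilde{\phi}:=f^{-1}\circ \psi\circ f$, which is a free, smooth, effective $T^n$-action on $\widetilde{M}$. Third, with $\widetilde{M}\overset{\pi}{\to} M$ a finite covering and $\widetilde{M}$ carrying a free torus action, the hypotheses of Proposition \ref{Proposition OrbitSpace} are met verbatim, and that proposition yields the desired polarized $\mathcal{F}$-structure on $M$. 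For the second assertion, I would note that every smoothing and every PL-structure on the homeomorphism type of $T^n$ is in particular a homotopy $n$-torus, hence a homotopy Euclidean space form with $\Gamma=\Z^n$, so the first part applies; here one also uses that a homotopy $n$-torus carrying a PL-structure is smoothable, which is recalled in Section \ref{Section Tori}.

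I do not anticipate a serious obstacle, since the substantive work has been absorbed into Theorem \ref{Theorem Davis} (the surgery-theoretic input of Hsiang-Shaneson and Wall, repackaged through Davis's Condition \ref{Definition Condition1}) and into Proposition \ref{Proposition OrbitSpace} (the orbit-space construction of the $\mathcal{F}$-structure). The one point deserving a sentence of care is that the conclusion of Theorem \ref{Theorem Davis} gives a diffeomorphism of $\widetilde M$ with the \emph{standard} $T^n$, and it is exactly this standard-smooth-structure identification that guarantees the transported action is smooth; a mere homeomorphism would not suffice to produce a smooth torus action. Once this is observed, the free $T^n$-action descends and the corollary follows immediately. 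The passage from the torus case to general Euclidean space forms requires no extra argument because Theorem \ref{Theorem Davis} is already stated for arbitrary $\mathbb{E}^n/\Gamma$, and the final ``in particular'' clause is just the specialization $\Gamma=\Z^n$.
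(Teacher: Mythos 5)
Your proposal is correct and follows exactly the paper's own route: the paper's proof consists of the single sentence that the result follows from Proposition \ref{Proposition OrbitSpace} and Theorem \ref{Theorem Davis}, which is precisely the assembly you describe. Your additional details (transporting the free translation action across the diffeomorphism from Theorem \ref{Theorem Davis}, and noting that the standard smooth identification is what makes the transported action smooth) are a faithful and helpful unpacking of that one-line argument.
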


\begin{proof} The proposition follows from Proposition \ref{Proposition OrbitSpace} and Theorem \ref{Theorem Davis}. 
\end{proof}

Examples of inequivalent smoothings on homotopy Euclidean space forms were described in Corollary \ref{Corollary HRPN}. We finish the section by mentioning an explicit construction of inequivalent PL-structures on homotopy Euclidean space forms.

\begin{example}\label{Example FakeTori} Fake tori. \emph{The connected sum construction (\ref{ExoticT}) with $X = T^n$ of homotopy n-tori that was described in Corollary \ref{Corollary HRPN} produces inequivalent smoothings but not inequivalent PL-structures. For any homotopy sphere $\Sigma^n$, the manifolds $T^n\#\Sigma^n$ and $T^n$ are PL-homeomorphic by the Alexander trick. Fake n-tori are constructed as mapping tori\begin{equation}M_h:= [0, 1]\times T^n/(1, x) (0, h(x))\end{equation} for a diffeomorphism $h: T^n\rightarrow T^n$, as described in Farrell-Gogolev \cite{[FG]}. If the diffeomorphism $h$ is not PL pseudo-isotopic to the identity map $id_{T^n}$, then the mapping torus $M_h$ is a fake torus \cite[Corollary 2.3]{[FG]}. The fake torus $M_h$ is the total space of a $T^n$-bundle over the circle. There is a bijection between the smooth pseudo-isotopy classes of diffeomorphisms $h$ whose induced homomorphisms on fundamental groups satisfy $h_{\ast} = id_{\pi_1(T^n)}$, and the smooth structures $\theta$ on a homotopy torus $M$ such that the inclusion map $\sigma: T^n\times \{0\}\hookrightarrow (M\times S^1, \theta)$ is a smooth embedding. The smooth structures correspond to the subgroup of elements $\varphi\in [T^{n + 1}, TOP/O]$ such that $\varphi\circ\sigma$ is null-homotopic \cite{[Wall], [FG]}. It is straight-forward to construct a polarized $\mathcal{T}$-structure on $M_h$ using the constructions presented in Section \ref{Section TC}.}
\end{example}

\section{Proofs of results stated in the introduction}

\subsection{Proof of Theorem \ref{Theorem P} and Corollary \ref{Corollary P}}{\label{Section Proof P}}

Homotopy Euclidean space n-forms admit a polarized $\mathcal{F}$-structure by Proposition \ref{Proposition OrbitSpace} and Theorem \ref{Theorem Davis} as stated in Corollary \ref{Corollary EucSF}. Their minimal volume is zero and they collapse with bounded sectional curvature by Theorem \ref{Theorem CG}, which implies that their Yamabe invariant is nonnegative as indicated in Lemma \ref{Lemma Blackbox}. Such a manifold does not admit a Riemannian metric of positive scalar curvature \cite{[GL]}, as stated in Theorem \ref{Theorem BGL}. Hence, we conclude that a homotopy Euclidean space form has zero Yamabe invariant. Other than the standard $\mathbb{E}^n/\Gamma$, no homotopy Euclidean space form admits a scalar-flat Riemannian metric by Theorem \ref{Theorem BGL}. 

\begin{remark}\label{Remark NilDavis} Jim Davis has shown in \cite[Appendix]{[Davis]} that a smooth manifold of dimension at least five and that is homeomorphic to a nilmanifold has a finite covering which is diffeomorphic to a nilmanifold. The existence of a polarized $\mathcal{F}$-structure on such a manifold can be proven using tweaks to the arguments and constructions that were presented in Section \ref{Section TC}. The Yamabe invariant on this homeomorphism class is zero and it is not realized by any smoothing since a smooth manifold that is homeomorphic to a nilmanifold does not admit Ricci-flat metrics.

\end{remark}

\subsection{Proof of Proposition \ref{Proposition P}}{\label{Section ProofPropP}}  This result is a produce of the work of Baues-Tuschmann \cite{[BaT]}, Theorem \ref{Theorem P}, and Theorem \ref{Theorem FCTori}. Indeed, let $M$ be an exotic/fake torus that is not a product of standard and a lower-dimensional exotic/fake tori; exotic tori of this kind are considered in Corollary \ref{Corollary HRPN} and Example \ref{Example FakeTori}. Baues-Tuschmann have shown that \begin{equation}\DMinVol(M) > 0.\end{equation} On the other hand, \begin{equation}\DMinVol(T^n) = 0.\end{equation} Since any homotopy n-tori is finitely covered by the standard n-torus, the proposition follows.

\begin{remark}{\label{Remark TS}}Baues-Tuschmann asked in \cite[Question 1.7]{[BaT]} if an exotic/fake torus that is not a product of a standard torus and a lower-dimensional exotic/fake torus always has non-vanishing minimal volume. Corollary \ref{Corollary P} answers their question in the negative. As we have mentioned in the introduction, they have shown that the finer invariant $\DMinVol$ does discern the standard smooth structure within a subset of homotopy tori. Corollary \ref{Corollary P} states that it is the realization of the Yamabe invariant that tells the standard smooth structure apart from any other.  
\end{remark}

\subsection{Proof of Theorem \ref{Theorem FundGroup}}  A polarized $\mathcal{T}$-structure was constructed on $M(G)$ in Proposition \ref{Proposition PGPolarized}. The claims concerning its minimal volume and collapse follow from Theorem \ref{Theorem CG}.

\section{Acknowledgements}

We are indebted to Jim Davis for kindly bringing \cite[Appendix]{[Davis]} to our attention, without which we could have not proven our main result as stated. We are grateful to an anonymous referee for kindly pointing out a mistake on a previous version of the manuscript. We thank Frank Quinn, Andrew Ranicki, and Terry Wall for helpful e-mail correspondence. We thank Bernd Ammann, Oliver Baues, Fernando Galaz-Garc\'ia, Marco Radeschi, and Wilderlich Tuschmann for interesting conversations. We gratefully acknowledge support from the University of Fribourg and the organizers of its Riemannian Topology Seminar 2015 for a very pleasant and productive meeting during which part of this paper was written.

\end{document}